\documentclass{amsart}
\usepackage[T1]{fontenc}
\usepackage{fullpage}
\usepackage[top=2cm, bottom=4.5cm, left=2.5cm, right=2.5cm]{geometry}
\usepackage{amsmath,amsthm,amsfonts,amssymb,amscd}
\usepackage{lastpage}
\usepackage{enumerate}
\usepackage{fancyhdr}
\usepackage{mathrsfs}
\usepackage{xcolor}
\usepackage{graphicx}
\usepackage{listings}
\usepackage{hyperref}
\usepackage{tikz}
\usepackage{tikz-cd}
\usepackage{commath}
\usepackage{bbm}
\usepackage{multirow}
\usepackage{multicol}
\usepackage{enumitem}
\usepackage{pgfplots}
\usepackage{cleveref}
\usepgfplotslibrary{polar}
\usepgflibrary{shapes.geometric}
\usetikzlibrary{calc}
\usepgfplotslibrary{fillbetween}
\usetikzlibrary{patterns}
\usetikzlibrary{calc}
\usetikzlibrary{decorations.pathmorphing}
\usepackage{stmaryrd}
\usepackage{placeins}
\usepackage{amsrefs}
\usepackage{thm-restate}
\usepackage{url}
\usepackage{wasysym}
\usepackage{tabularx}
\usepackage{makecell}

\newcommand{\p}{\partial}

\newcommand{\R}{\mathbb{R}}

\newcommand{\ra}{\rightarrow}

\newtheorem{theorem}{Theorem}
\newtheorem{corollary}[theorem]{Corollary}
\newtheorem{lemma}[theorem]{Lemma}
\theoremstyle{definition}
\newtheorem{definition}[theorem]{Definition}
\newtheorem{prop}[theorem]{Proposition}
\newtheorem*{Acknow}{Acknowledgements}
\newtheorem{remark}[theorem]{Remark}
\newtheorem{question}[theorem]{Question}
\newtheorem*{definition*}{Definition}
\newtheorem*{theorem*}{Theorem}
\newtheorem*{corollary*}{Corollary}
\newtheorem*{problem*}{Problem}
\newtheorem*{question*}{Question}
\newcommand{\A}{\mathcal{A}}

\theoremstyle{definition}
\newtheorem{exmp}[theorem]{Example}

\DeclareMathOperator{\im}{im}

\newcommand{\vdotrows}[2][-15pt]{%
  \multirow{#2}{*}{%
    \vbox {%
      \baselineskip = \dimexpr 2pt\relax 
      \multiply\baselineskip by #2\relax
      \advance\baselineskip by 2pt\relax
      \lineskiplimit = 0pt\relax
      \kern 6pt\relax
      \vskip #1\relax%
      \hbox {.}\hbox {.}\hbox {.}}%
  }%
}

\title[Algebroid Desingularizable Poisson Structures]{Algebroid Desingularizable Poisson Structures}

\author{Shane Rankin}
\address{Department of Mathematics, University of California, Riverside, CA 92521, USA}
\email{srank007@ucr.edu}

\pgfplotsset{compat=1.18}
\begin{document}

\begin{abstract}
 We introduce \emph{algebroid desingularizable Poisson manifolds}, a class of Poisson manifolds induced by symplectic Lie algebroids with almost-injective anchors, generalizing structures including log-symplectic, $b^m$-symplectic, and $E$-symplectic manifolds. We give an infinitesimal obstruction to the existence of such an algebroid for a general Poisson manifold, and then characterize the linear case by showing that the dual of a real finite-dimensional Lie algebra, equipped with the KKS Poisson structure is desingularizable if and only if it possesses an abelian ideal of dimension $\dim(\mathfrak{g})-r$, where $2r$ is the maximal coadjoint orbit dimension.
\end{abstract}
\maketitle

\section{Introduction and Results}
Generically symplectic Poisson structures arise naturally across mathematical physics. The dynamics of pseudo-Riemannian manifolds \cite{MR2518642}, Painlevé equations \cite{MR636469}, and the restricted three-body problem \cite{MR656211} each produce Poisson structures that are necessarily degenerate: the dual 2-form to the Poisson bivector does not exist as a smooth section of the cotangent bundle. In favorable situations, however, one can construct a new vector bundle whose sections carry a Lie bracket analogous to the Lie bracket on vector fields, and on which this dual $2$-form exists as a smooth, closed, nondegenerate element. Such a bundle is a \emph{symplectic Lie algebroid}.
Symplectic Lie algebroids were introduced by Nest and Tsygan \cite{nest1999deformationssymplecticliealgebroids}, who showed they admit Fedosov deformation quantization and canonically induce a Poisson structure on the base manifold. \par Subsequent work has shown that many important families of Poisson manifolds arise this way: log-symplectic manifolds, $b^m$-Poisson manifolds, and $E$-manifolds each admit a symplectic Lie algebroid realizing their Poisson structure, see \cites{M_rcu__2014, Guillemin_2014, garmendia2025estructuresregularpoissonmanifolds}. In the physics literature, these structures have been studied under the name quasi-symplectic manifolds in connection with deformation quantization and BRST theory \cite{Lyakhovich_2006}. More recently, deformation quantization of the underlying Poisson structure via Toeplitz operators on the inducing algebroid has been developed in \cite{Toeplitz}. In \cite{bischoff2026poissongeometrytruncatedpolynomials}, Bischoff and Witte study the case in which the anchor map of the algebroid is generically an isomorphism onto the tangent bundle, and extract substantial information about the underlying Poisson structure from the algebroid's symplectic geometry. 
\par The common thread running through all of these examples is that the anchor map is injective on an open dense subset of the base manifold. Such algebroids are called almost-injective, and we call a Poisson manifold \emph{algebroid desingularizable} or just \emph{desingularizable} if it is induced by a symplectic Lie algebroid with an almost-injective anchor. This notion unifies the examples above under a single definition and raises a natural question: which Poisson manifolds are desingularizable? On a symplectic leaf of maximal dimension, such an algebroid's anchor is an isomorphism onto the tangent space of the leaf, and so obstructions lie at singular points. If $x \in M$ is such that $\pi_x$ vanishes, then $T_x^*M$ inherits the structure of a Lie algebra from the Poisson bracket, and we have the following:
\begin{theorem*}
      Let $(M,\pi)$ be a Poisson manifold, and let $x \in M$ be a point at which $\pi_x$ vanishes. Then $(T^*_xM, [\cdot,\cdot]_{\pi})$ has a nonzero abelian ideal. In particular, if this Lie algebra is semisimple, then $(M,\pi)$ is not desingularizable.
\end{theorem*}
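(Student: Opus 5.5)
Throughout I read the statement with the standing hypothesis that $(M,\pi)$ is desingularizable and $\dim M = n > 0$; without it, the claim fails, e.g. for a linear Poisson structure on $\mathfrak{g}^*$ with $\mathfrak{g}$ semisimple. The plan is to produce the abelian ideal explicitly as the annihilator of the image of the anchor at $x$.

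\emph{Setup.} Fix a symplectic Lie algebroid $(A,\rho,[\cdot,\cdot]_A,\omega)$ with almost-injective anchor inducing $\pi$. Write $\omega^\flat: A\to A^*$ for the isomorphism given by $\omega$. That $A$ induces $\pi$ means
\[
\pi^\sharp=\rho\circ(\omega^\flat)^{-1}\circ\rho^*, \qquad \{f,g\}=\omega^{-1}(\rho^*df,\rho^*dg).
\]
Define $K:=\ker(\rho_x^*)\subseteq T_x^*M$, which is the annihilator of $\operatorname{im}\rho_x$. I will show three things: $K$ is an ideal, $K$ is abelian, and $K\neq 0$.

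\emph{$K$ is an ideal.} Recall that the bracket on $T_x^*M$ is $[df_x,dg_x]_\pi=d\{f,g\}_x$. Take $\alpha=df_x\in K$ and $\beta=dg_x$ arbitrary. The section $\rho^*df$ of $A^*$ vanishes at $x$, so by the Leibniz rule
\[
d\{f,g\}_x=\omega^{-1}\bigl(\nabla(\rho^*df)_x,\;\rho^*_x\beta\bigr),
\]
where $\nabla(\rho^*df)_x$ is the intrinsic derivative of a section at a zero, computed in any local trivialization. I then need this covector to lie in $K$, i.e. to kill $\operatorname{im}\rho_x$. This is the fiddly step, and I would try two routes.

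The first route is cleaner. The map $\phi=(\omega^\flat)^{-1}\rho^*:(T^*M,[\cdot,\cdot]_\pi)\to A$ is a Lie algebroid morphism; this is standard for induced Poisson structures and can be checked on exact forms. At $x$ it restricts to a Lie algebra homomorphism $T^*_xM\to\ker\rho_x$, the isotropy algebra of $A$ at $x$. Since $\rho_x^*\beta=0$ forces $\phi_x(\beta)=0$, we get $K=\ker\phi_x$, and a kernel of a homomorphism is an ideal.

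The second route is a direct check: for $v=\rho_x(a)$, compute $\langle d\{f,g\}_x,v\rangle=\rho(a)\{f,g\}(x)$ using the morphism identity. I expect this identification of the bracket at $x$ to be the main technical point.

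\emph{$K$ is abelian.} If both $\alpha=df_x$ and $\beta=dg_x$ lie in $K$, then $\rho^*df$ and $\rho^*dg$ both vanish at $x$. Hence $\{f,g\}=\omega^{-1}(\rho^*df,\rho^*dg)$ is bilinear in two sections vanishing at $x$, so it vanishes to second order there. Therefore $[\alpha,\beta]_\pi=d\{f,g\}_x=0$.

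\emph{$K\neq 0$.} Suppose instead $K=0$. Then $\rho_x$ is surjective, so $\operatorname{rank}A\ge n$. Almost-injectivity gives $\operatorname{rank}A\le n$, since $\rho$ is injective at some point. So $\rho_x$ is an isomorphism, and hence $\rho$ is an isomorphism on a neighbourhood of $x$. Transporting $\omega$ through $\rho$ then shows $\pi$ is nondegenerate near $x$, contradicting $\pi_x=0$ with $n>0$. Thus $K$ is a nonzero abelian ideal.

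\emph{The semisimple case.} A semisimple Lie algebra has no nonzero abelian ideals. So if $T_x^*M$ is semisimple, no such $A$ can exist, and $(M,\pi)$ is not desingularizable.
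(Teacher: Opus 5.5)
Your proof is correct. You are also right that the statement needs a desingularizability hypothesis and, tacitly, $n>0$. The paper's body version, Corollary~\ref{isotropyPointNonzeroIdeal}, assumes only local desingularizability at $x$, and your argument is local, so it works under that weaker hypothesis too.

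Your ideal and abelian steps coincide with the paper's Lemma~\ref{AbelianIdeal}. There, $K=\ker\rho_x^*=\ker\Phi_x$ is the kernel of the isotropy homomorphism induced by $\Phi=\pi_\A^\sharp\circ\rho^*$, and abelianness comes from second-order vanishing of $\{f,g\}$ at $x$. Your bilinearity remark is a cleaner form of the paper's Hadamard-lemma computation. The paper gets the morphism property of $\Phi$ from injectivity of $\rho$ on sections. You instead cite it as a general fact, which is true because $\rho^*\colon (T^*M)_\pi\to(\A^*)_{\pi_\A}$ and $\pi_\A^\sharp$ are both algebroid morphisms.

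The real difference is in showing $K\neq 0$. You argue that $K=0$ forces $\rho_x$ to be surjective. Combined with $\operatorname{rank}\A\le n$ from almost-injectivity, $\rho_x$ is then an isomorphism, so $\pi_x^\sharp=\rho_x(\pi_\A^\sharp)_x\rho_x^*$ would be invertible; no neighbourhood is needed for this. The paper instead observes that $\pi_x=0$ makes $\im\rho_x^*$ isotropic for $(\pi_\A)_x$. This gives $\operatorname{rank}\rho_x^*\le\frac12\operatorname{rank}\A$, hence $\dim K\ge n-\frac12\operatorname{rank}\A\ge\frac n2$ (Proposition~\ref{DoubleBound}).

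Your route is more elementary but only qualitative. The paper's isotropy bound is quantitative. That lower bound, combined with $\operatorname{rank}\A=2r$, is exactly what later yields $\dim\ker\Phi_0=n-r$ in the necessity half of the linear classification.
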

For linear Poisson structures, that is, for duals of finite-dimensional real Lie algebras equipped with the Kirillov-Kostant-Souriau Poisson structure, this obstruction is sharp in the following sense:
\begin{theorem*}
     Let $\mathfrak{g}$ be a real finite dimensional Lie algebra of dimension $n$, with $2r=\max_{\mathcal{O} \in \mathcal{L}}\{\dim(\mathcal{O})\}$ where $\mathcal{L}$ is the set of coadjoint orbits. Then $\mathfrak{g}^*$ is desingularizable if and only if there exists an abelian ideal $\mathfrak{a}$ of $\mathfrak{g}$ of dimension $n-r$.
\end{theorem*}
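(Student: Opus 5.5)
The plan is to prove both directions by hand, using throughout a basis adapted to the putative ideal. Write $\pi$ for the KKS bivector on $\mathfrak{g}^*$, so that $T_0^*\mathfrak{g}^*\cong\mathfrak{g}$ with the bracket $[\cdot,\cdot]_\pi$ equal to the Lie bracket of $\mathfrak{g}$. Any inducing symplectic algebroid $(A,\omega)$ with almost-injective anchor $\rho$ has rank $2r$. Indeed, on the open dense set of maximal orbits, $\rho$ is injective with image the tangent space of the leaf, which has dimension $2r$.

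\emph{($\Rightarrow$).} Let $(A,\omega,\rho)$ induce $\pi$, so $\pi^\sharp=\rho\,\omega^{-1}\rho^*$. Put $K=\ker(\rho_0^*)\subset\mathfrak{g}$.

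First, $\dim K\ge n-r$. Since $\pi_0=0$, the subspace $\operatorname{im}\rho_0^*\subset A_0^*$ is isotropic for the nondegenerate form $\omega^{-1}_0$ on the $2r$-dimensional space $A_0^*$. Hence $\operatorname{rank}\rho_0^*\le r$.

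Second, $K$ is an ideal. For a function $f$, let $X_f\in\Gamma(A)$ be defined by $\iota_{X_f}\omega=\rho^*df$. Closedness of $\omega$ gives the standard identity $X_{\{f,h\}}=[X_f,X_h]_A$, so $d\{f,h\}(0)$ is determined by $[X_f,X_h]_A(0)$. Take $f$ with $df_0\in K$ and $h$ arbitrary. Write $X_f=\sum f_ie_i$ and $X_h=\sum h_je_j$ in a local frame. Then
\[[X_f,X_h](0)=\sum\big(f_ih_j[e_i,e_j]+f_i\,\rho(e_i)(h_j)\,e_j-h_j\,\rho(e_j)(f_i)\,e_i\big)(0).\]
All three terms vanish: $f_i(0)=0$ kills the first two, and $\sum_j h_j(0)\rho(e_j)_0=\rho(X_h)_0=\pi^\sharp_0(dh_0)=0$ kills the third. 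Thus $\rho_0^*\,d\{f,h\}_0=0$, so $[K,\mathfrak{g}]\subset K$. The same computation with $h$ also in $K$ shows $K$ is an ideal; abelianness follows from $\{f,h\}=\omega^{-1}(\rho^*df,\rho^*dh)$, which vanishes to second order at $0$ when both covectors lie in $K$.

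Third, $\dim K\le n-r$. For an abelian subalgebra $\mathfrak{a}$ and generic $\xi$, the form $B_\xi(u,v)=\xi([u,v])$ has rank $2r$ and $\mathfrak{a}$ is $B_\xi$-isotropic. Hence $\dim\mathfrak{a}\le(n-2r)+r=n-r$, and so $\dim K=n-r$.

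\emph{($\Leftarrow$).} Let $\mathfrak{a}$ be an abelian ideal of dimension $n-r$. Choose $e_1,\dots,e_r$ spanning a complement and $f_1,\dots,f_{n-r}$ spanning $\mathfrak{a}$, with dual coordinates $x_i,y_\alpha$ on $\mathfrak{g}^*$. Because $\mathfrak{a}$ is an abelian ideal,
\[\pi=\sum_i\partial_{x_i}\wedge Z_i+\tfrac12\sum_{i,j}c_{ij}(x,y)\,\partial_{x_i}\wedge\partial_{x_j},\qquad Z_i=\sum_\alpha\langle y,[e_i,f_\alpha]\rangle\,\partial_{y_\alpha},\]
where $c_{ij}(\xi)=\langle\xi,[e_i,e_j]\rangle$. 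The fields $Z_i$ depend only on $y$, so $[\partial_{x_i},Z_j]=0$. Moreover $[Z_i,Z_j]$ is the dual action of $[e_i,e_j]$ on $\mathfrak{a}$; since $\mathfrak{a}$ acts trivially on itself, this equals $\sum_k\gamma^k_{ij}Z_k$, where $[e_i,e_j]\equiv\sum_k\gamma^k_{ij}e_k \bmod \mathfrak{a}$.

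So the fields $\partial_{x_i},Z_i$ span a $2r$-dimensional Lie algebra $\mathfrak{h}$ of vector fields. Let $A=\mathfrak{g}^*\times\mathfrak{h}$ be the action algebroid, with basis sections $\varepsilon_i\mapsto\partial_{x_i}$ and $\zeta_i\mapsto Z_i$. Its anchor is almost-injective: at a generic point $\pi$ has rank $2r$, and the displayed formula shows $\operatorname{im}\pi^\sharp\subset\operatorname{span}\{\partial_{x_i},Z_i\}$, so these $2r$ fields are independent there.

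Define $\tilde\pi=\sum_i\varepsilon_i\wedge\zeta_i+\tfrac12\sum c_{ij}\,\varepsilon_i\wedge\varepsilon_j\in\Gamma(\wedge^2A)$. It is nondegenerate everywhere, since its matrix is block triangular with identity off-diagonal blocks, and $\rho(\tilde\pi)=\pi$. Because $\rho$ is a bracket-preserving morphism, $\rho([\tilde\pi,\tilde\pi]_A)=[\pi,\pi]=0$. On the dense set where $\rho$ is injective this forces $[\tilde\pi,\tilde\pi]_A=0$, and by continuity it vanishes everywhere. Then $\omega=\tilde\pi^{-1}$ is a closed nondegenerate $A$-form inducing $\pi$.

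The main obstacle is the ideal property in ($\Rightarrow$), namely that $[X_f,X_h]_A(0)=0$. The identity $X_{\{f,h\}}=[X_f,X_h]_A$ must be justified from $d_A\omega=0$, using the Cartan calculus on $A$, and I would verify this carefully.
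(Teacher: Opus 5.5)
Your proposal is correct and follows the paper's proof in all essentials. Isotropy of $\operatorname{im}\rho_0^*$ gives $\dim\ker\rho_0^*\ge n-r$, and the isotropic-subspace bound for $B_\xi$ gives the reverse inequality; the converse uses the same algebroid and the same $\tilde\pi$ (the paper pulls the vector-field bracket back along the injective anchor, which produces exactly your action algebroid of $\mathfrak{h}$). The only substantive difference is in the ideal property: you get it from the identity $X_{\{f,h\}}=[X_f,X_h]_A$ and a frame computation at $0$, while the paper uses injectivity of $\rho$ on sections to show $\Phi_0=(\pi_\A^\sharp\circ\rho^*)_0$ is a Lie algebra morphism, so your route has the small advantage of not needing almost-injectivity for that step.
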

From this, we have the following corollaries:
\begin{corollary*}
     If $\mathfrak{g}$ is a real finite-dimensional, non-abelian reductive Lie algebra, then $\mathfrak{g}^*$ is not desingularizable.
\end{corollary*}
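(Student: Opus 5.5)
We derive the corollary from the second theorem stated above: $\mathfrak{g}^*$ is desingularizable if and only if $\mathfrak{g}$ has an abelian ideal of dimension $n-r$. It therefore suffices to show that a non-abelian reductive $\mathfrak{g}$ has no abelian ideal of dimension $n-r$.

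\textbf{Step 1: structure of abelian ideals.} Write $\mathfrak{g}=\mathfrak{z}\oplus\mathfrak{s}$, where $\mathfrak{z}$ is the center and $\mathfrak{s}=[\mathfrak{g},\mathfrak{g}]$ is semisimple and nonzero because $\mathfrak{g}$ is non-abelian. Let $\mathfrak{a}$ be an abelian ideal. Its projection $p(\mathfrak{a})$ to $\mathfrak{s}$ is an abelian ideal of $\mathfrak{s}$, since $p$ is a surjective homomorphism. A semisimple Lie algebra has no nonzero abelian ideals, so $p(\mathfrak{a})=0$ and $\mathfrak{a}\subseteq\mathfrak{z}$. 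Hence every abelian ideal has dimension at most $\dim\mathfrak{z}$.

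\textbf{Step 2: compare with $n-r$.} The coadjoint orbits of $\mathfrak{g}$ are those of $\mathfrak{s}$ (translated by a fixed element of $\mathfrak{z}^*$), so $2r=\dim\mathfrak{s}-\operatorname{rank}\mathfrak{s}$. Here the rank is the generic dimension of a stabilizer, which for a real semisimple algebra equals the complex rank, attained at regular elements via the Killing form identification. Thus
\[
n-r=\dim\mathfrak{z}+\dim\mathfrak{s}-\tfrac{1}{2}(\dim\mathfrak{s}-\operatorname{rank}\mathfrak{s})=\dim\mathfrak{z}+\tfrac{1}{2}(\dim\mathfrak{s}+\operatorname{rank}\mathfrak{s}).
\]
Since $\mathfrak{s}\neq 0$, we have $\dim\mathfrak{s}>0$, so $n-r>\dim\mathfrak{z}$.

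Combining Steps 1 and 2, no abelian ideal can reach dimension $n-r$, and the theorem gives non-desingularizability.

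The only point needing care is the identification of $r$. We actually need only the weaker inequality $r<\dim\mathfrak{s}$, and this holds without computing the rank exactly. Indeed, $2r\le\dim\mathfrak{s}$ because orbits lie in an affine copy of $\mathfrak{s}^*$, and the generic stabilizer in $\mathfrak{s}$ of $\xi\in\mathfrak{s}^*$ is nonzero, since $\xi$ corresponds under the Killing form to $X$ with $X\in\mathfrak{s}_X$. When $X=0$ the stabilizer is all of $\mathfrak{s}$. So $2r<\dim\mathfrak{s}$, which gives $n-r>\dim\mathfrak{z}+\tfrac12\dim\mathfrak{s}>\dim\mathfrak{z}$, as required.
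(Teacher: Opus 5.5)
Your proof is correct and follows the same route as the paper. The shared argument is that abelian ideals of $\mathfrak{g}=\mathfrak{z}\oplus\mathfrak{s}$ lie in $\mathfrak{z}$, while $n-r=\dim\mathfrak{z}+\tfrac12(\dim\mathfrak{s}+\operatorname{rank}\mathfrak{s})>\dim\mathfrak{z}$. Your projection argument in Step 1 is a bit tighter than the paper's intersection argument, and your closing remark correctly shows that the cited formula for $2r$ is not needed. In fact, $2r\le\dim\mathfrak{s}$ together with $\mathfrak{s}\neq 0$ already gives $r<\dim\mathfrak{s}$, so even your observation that stabilizers are nonzero is superfluous.
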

\begin{corollary*}
     If $\mathfrak{g}$ is a real Lie algebra of dimension $3$ or less, then $\mathfrak{g}^*$ is desingularizable if and only if $\mathfrak{g}$ is not semisimple.
\end{corollary*}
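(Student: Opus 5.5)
The plan is to deduce this corollary from the linear characterization theorem stated above: $\mathfrak{g}^*$ is desingularizable if and only if $\mathfrak{g}$ has an abelian ideal of dimension $n-r$, where $2r$ is the maximal coadjoint orbit dimension. The direction ``semisimple $\Rightarrow$ not desingularizable'' is immediate. A semisimple Lie algebra is in particular non-abelian reductive, so the preceding corollary applies. Alternatively, the obstruction theorem applied at the origin of $\mathfrak{g}^*$ gives the same conclusion, since there $T_0^*\mathfrak{g}^*\cong\mathfrak{g}$ with its own bracket, and a semisimple algebra has no nonzero abelian ideal.

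For the converse, let $\mathfrak{g}$ be non-semisimple with $n=\dim\mathfrak{g}\le 3$. I will exhibit an abelian ideal of dimension $n-r$.

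\emph{Abelian case.} If $\mathfrak{g}$ is abelian, then all coadjoint orbits are points, so $r=0$. Then $\mathfrak{g}$ itself is the required ideal.

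\emph{Non-abelian case.} Here the KKS bivector is nonzero at some point. Since $n\le 3$, its rank is at most $2$, so $r=1$ and we need an abelian ideal of dimension $n-1$. By the Levi decomposition, a non-semisimple algebra of dimension $\le 3$ has zero Levi factor, because the smallest real simple Lie algebras have dimension $3$. Hence $\mathfrak{g}$ is solvable.
\begin{itemize}
\item For $n=2$ the only non-abelian algebra is $[x,y]=y$, and $\mathrm{span}(y)$ is a one-dimensional abelian ideal.
\item For $n=3$, consider the nilradical $\mathfrak{n}$. For solvable $\mathfrak{g}$ we have $[\mathfrak{g},\mathfrak{g}]\subseteq\mathfrak{n}$. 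We also have $\dim\mathfrak{n}\ge n/2$, so $\dim\mathfrak{n}\ge 2$.
\begin{itemize}
\item If $\dim\mathfrak{n}=2$, then $\mathfrak{n}$ is a $2$-dimensional nilpotent algebra, hence abelian, and it is the desired ideal.
\item If $\dim\mathfrak{n}=3$, then $\mathfrak{g}$ is nilpotent and non-abelian, hence the Heisenberg algebra $[x,y]=z$. In that case $\mathrm{span}(x,z)$ is an abelian ideal, since $[y,x]=-z\in\mathrm{span}(x,z)$.
\end{itemize}
\end{itemize}

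The only step needing care is the bound $\dim\mathfrak{n}\ge 2$ in dimension $3$. If I prefer not to quote the general inequality, I would argue directly. Since $\mathfrak{g}$ is solvable, $\mathfrak{d}=[\mathfrak{g},\mathfrak{g}]$ is a proper nilpotent ideal.
\begin{itemize}
\item If $\dim\mathfrak{d}=2$, it is abelian and we are done.
\item If $\dim\mathfrak{d}=1$, say $\mathfrak{d}=\mathrm{span}(z)$, then $\mathrm{ad}_X$ acts on $\mathfrak{d}$ by a linear functional $\lambda$. Choose any $2$-dimensional subspace $V$ containing $z$ on which the bracket vanishes: take $V=\ker\lambda$ when $\lambda\neq 0$ (then $[\ker\lambda,z]=0$, and $\ker\lambda$ is abelian since all brackets land in $\mathrm{span}(z)$ and are killed appropriately), or $V=\mathrm{span}(x,z)$ when $\lambda=0$. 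Any subspace containing $\mathfrak{d}$ is automatically an ideal.
\end{itemize}
I expect this case check, rather than any conceptual point, to be the main work.

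Alternatively, one can simply run through the Bianchi classification. Each non-semisimple type is either $\mathbb{R}\ltimes_A\mathbb{R}^2$, which has the abelian ideal $\mathbb{R}^2$, or is $\mathfrak{aff}(1)\oplus\mathbb{R}$ or Heisenberg, each of which has an evident $2$-dimensional abelian ideal.
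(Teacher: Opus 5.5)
Your proof is correct, and its skeleton matches the paper's: you reduce to Theorem~\ref{MainThm}, observe that $r=1$ in the non-abelian case, and exhibit a $2$-dimensional abelian ideal when $n=3$. The difference is in how that ideal is found.

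Your primary route is structural. You use the Levi decomposition to get solvability, then the nilradical together with the bound $\dim\mathfrak{n}\ge\frac12\dim\mathfrak{g}$ and the classification of $3$-dimensional nilpotent algebras.

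The paper instead splits on $\dim[\mathfrak{g},\mathfrak{g}]\in\{1,2\}$, which is essentially your fallback argument.
\begin{itemize}
\item In the $1$-dimensional case the paper uses your $\ker\lambda$ argument, but uniformly. Since $[z,z]=0$ forces $z\in\ker\lambda$ and $\dim\ker\lambda\ge 2$, your split into $\lambda=0$ versus $\lambda\neq 0$ is unnecessary.
\item In the $2$-dimensional case the paper proves $[\mathfrak{g},\mathfrak{g}]$ abelian by an explicit Jacobi-identity computation. You instead appeal to nilpotency of the derived algebra of a solvable algebra.
\end{itemize}

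The paper's route is elementary and self-contained. Yours quotes heavier results (Levi decomposition, Lie's theorem, Mubarakzyanov's bound). In exchange, it justifies a step the paper only asserts: that a non-semisimple $\mathfrak{g}$ has $[\mathfrak{g},\mathfrak{g}]\neq\mathfrak{g}$. That implication fails in general; for example, $\mathfrak{sl}_2(\mathbb{R})\ltimes\mathbb{R}^2$ is perfect but not semisimple. In dimension $3$ it holds for exactly the reason you give: a nonzero Levi factor already has dimension at least $3$.
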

\begin{corollary*}
    Suppose that $\mathfrak{g}$ is a real finite-dimensional $2$-step nilpotent Lie algebra with center $Z(\mathfrak{g})$, let $V$ be any complement of $Z(\mathfrak{g})$, i.e. $\mathfrak{g}=V \oplus Z(\mathfrak{g})$ as vector spaces, and let $B:\bigwedge^2V \ra \mathfrak{g}$ be the bracket map, i.e. $B(v\wedge w)=[v,w]$. Then $\mathfrak{g}$ is desingularizable if and only if there is subspace $W \subset V$ that is isotropic with respect to $B$ and satisfies $\dim(W) \geq \dim(V)-r$.
\end{corollary*}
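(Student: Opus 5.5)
The plan is to reduce the statement directly to the preceding theorem characterizing desingularizability of $\mathfrak{g}^*$: $\mathfrak{g}^*$ is desingularizable if and only if $\mathfrak{g}$ has an abelian ideal of dimension exactly $n-r$. Two structural facts about a $2$-step nilpotent $\mathfrak{g}$ do all the work.

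First, $[\mathfrak{g},\mathfrak{g}]\subset Z(\mathfrak{g})$. Consequently any subspace containing $Z(\mathfrak{g})$ contains $[\mathfrak{g},\mathfrak{g}]$, and is therefore automatically an ideal.

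Second, $2r\le \dim V$. For $\xi\in\mathfrak{g}^*$, the coadjoint orbit through $\xi$ has dimension equal to the rank of the skew form $\omega_\xi(x,y)=\xi([x,y])$ on $\mathfrak{g}$. This form has $Z(\mathfrak{g})$ in its kernel, so it descends to $\mathfrak{g}/Z(\mathfrak{g})\cong V$. Its rank is then at most $\dim V$, so $r\le \dim V/2$, and in particular $\dim V - r\ge 0$.

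\emph{Forward direction.} Suppose $\mathfrak{a}$ is an abelian ideal with $\dim\mathfrak{a}=n-r$. Set $\mathfrak{a}'=\mathfrak{a}+Z(\mathfrak{g})$.
\begin{itemize}
\item $\mathfrak{a}'$ is still abelian, because $Z(\mathfrak{g})$ is central.
\item Since $\mathfrak{a}'\supset Z(\mathfrak{g})$, we have $\mathfrak{a}'=W\oplus Z(\mathfrak{g})$, where $W=\mathfrak{a}'\cap V$ is the projection of $\mathfrak{a}'$ onto $V$ along $Z(\mathfrak{g})$.
\item For $w,w'\in W$ we have $B(w\wedge w')=[w,w']\in[\mathfrak{a}',\mathfrak{a}']=0$, so $W$ is isotropic.
\item Finally, $\dim W=\dim\mathfrak{a}'-\dim Z(\mathfrak{g})\ge n-r-\dim Z(\mathfrak{g})=\dim V-r$.
\end{itemize}

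\emph{Converse.} Suppose $W\subset V$ is isotropic with $\dim W\ge \dim V-r$. By the second fact, $\dim V-r\ge 0$, so we may choose a subspace $W_0\subset W$ with $\dim W_0=\dim V-r$; it is still isotropic. Put $\mathfrak{a}=W_0\oplus Z(\mathfrak{g})$.
\begin{itemize}
\item $\mathfrak{a}$ is abelian: brackets within $W_0$ vanish by isotropy, and $Z(\mathfrak{g})$ is central.
\item $\mathfrak{a}$ is an ideal by the first fact, since it contains $Z(\mathfrak{g})\supset[\mathfrak{g},\mathfrak{g}]$.
\item $\dim\mathfrak{a}=\dim V-r+\dim Z(\mathfrak{g})=n-r$.
\end{itemize}
The main theorem then gives that $\mathfrak{g}^*$ is desingularizable.

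The only step requiring care is obtaining the \emph{exact} dimension $n-r$ in the converse. This is why the bound $2r\le\dim V$ is needed: it guarantees that $\dim V-r\ge 0$, so the truncation $W_0\subset W$ exists.
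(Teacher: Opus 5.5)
Your proof is correct and follows the paper's argument: both directions reduce to Theorem~\ref{MainThm} via $\mathfrak{a}=W\oplus Z(\mathfrak{g})$ and $W=\mathfrak{a}\cap V$. Your preliminary enlargement to $\mathfrak{a}+Z(\mathfrak{g})$ is harmless but unnecessary, since the count $\dim(\mathfrak{a}\cap V)\ge\dim\mathfrak{a}-\dim Z(\mathfrak{g})$ works directly.

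The only real difference is how the converse reaches the exact dimension $n-r$. The paper keeps all of $W$ and uses the upper bound $\dim\mathfrak{a}\le n-r$ for abelian subalgebras (Proposition~\ref{abelianidealrankbound}), which also shows that $\dim W=\dim V-r$ is forced. You instead truncate $W$ after checking $2r\le\dim V$.
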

The paper is organized as follows. Section 2 establishes the framework of desingularizable Poisson manifolds, including foundational examples and basic structural properties, before discussing local desingularizability, and proving the semisimple obstruction. Section 3 turns to the linear case; Subsection 3.1 provides the necessity direction of the main proof, and Subsection 3.2 provides sufficiency. Subsection 3.3 then gives proofs of the corollaries listed above.
\begin{Acknow}
    The author would like to thank Rui Loja Fernandes for conversations inspiring this work, as well as Wilmer Smilde and Deniz Ozbay for conversations during Gone Fishing 2026 at the CRM in Montreal.
\end{Acknow}
\section{Desingularization}
\subsection{Background}
Throughout this paper $M$ will denote a connected, smooth manifold without boundary of dimension $n$ unless otherwise specified. For a vector bundle $E \ra M$, $\Gamma(E)$ will denote the space of global smooth sections of $E$.
\begin{definition}
	A \emph{Poisson algebroid} is a Lie algebroid $(\A, \rho, [ \cdot, \cdot])$ over $M$, with a choice of $\pi_\A \in \Gamma(\bigwedge^2 \A)$ satisfying $[\pi_\A, \pi_\A]=0$, where the bracket is the Nijenhuis-Schouten extension of the bracket on $\Gamma(\A)$.
\end{definition}
Often, we will simply write $(\A,\pi_\A)$ when the anchor map and bracket are clear from context. In the case that $\A$ is chosen to be the tangent bundle to $M$, this exactly recovers the standard notion of a Poisson bivector, i.e. an element $\pi \in \Gamma(\bigwedge^2 TM)$ such that
\begin{equation*}
	\pi(df,dg) = \{f,g\}, \qquad \forall f,g \in C^\infty(M)
\end{equation*}
where $[\pi,\pi]=0$ is equivalent to the Jacobi identity of $\{ \cdot , \cdot \}$. In the case that $\pi_\A$ is nondegenerate the map $\pi_\A^\sharp$ is an isomorphism of cochain complexes. In this case of invertibility, there is a well-defined dual element to $\pi_\A^\sharp$, denoted $\omega_\A=\pi_\A^{-1}$ that plays the role of a symplectic structure.
\begin{definition}
	A \emph{symplectic Lie algebroid} is a Lie algebroid $\A$, with a choice of closed, nondegenerate element $\omega_\A \in \Gamma(\bigwedge^2\A^*)$.
\end{definition}

\begin{prop}
	If $(\A,\pi_\A)$ is a Poisson algebroid with invertible $\pi_\A$, then $\pi_\A^{-1}=:\omega_\A \in \Gamma(\bigwedge^2 \A^*)$ satisfies $d_\A  \omega_\A=0$.
\end{prop}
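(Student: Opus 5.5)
The plan is to transfer the computation to the Lie algebroid de Rham complex, exactly as in the classical proof that the inverse of a nondegenerate Poisson bivector is a closed $2$-form. I would not expand $[\pi_\A,\pi_\A]$ in a local frame. Instead I would use the Lichnerowicz-type differential $d_{\pi_\A} := [\pi_\A,\cdot]$ on $\Gamma(\bigwedge^\bullet \A)$. Since $[\pi_\A,\pi_\A]=0$, the graded Jacobi identity for the Schouten bracket gives $d_{\pi_\A}^2=0$. The key structural fact is that the extension
\begin{equation*}
\pi_\A^\sharp:\Gamma(\textstyle\bigwedge^\bullet \A^*)\ra \Gamma(\textstyle\bigwedge^\bullet \A)
\end{equation*}
is a cochain map, $\pi_\A^\sharp \circ d_\A = -\,d_{\pi_\A}\circ \pi_\A^\sharp$ (up to a sign convention). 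The paper asserts this just before the definition of symplectic Lie algebroids. I would verify it on generators: on functions, $\pi_\A^\sharp(d_\A f) = -[\pi_\A,f]$ is essentially the definition of the Hamiltonian section. On exact $1$-forms, $\pi_\A^\sharp(d_\A(g\,d_\A f))$ is compared with $[\pi_\A, g\,\pi_\A^\sharp d_\A f]$; the Jacobi identity $[\pi_\A,\pi_\A]=0$ is used precisely here, to show $[\pi_\A,[\pi_\A,f]]=0$. Both sides are derivations of the wedge product, so agreement on functions and exact $1$-forms suffices locally.

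Next I would identify $\omega_\A$ with $\pi_\A$ under this isomorphism. With the convention $\omega_\A = \pi_\A^{-1}$, meaning $\omega_\A^\flat = (\pi_\A^\sharp)^{-1}$, a direct check gives $(\bigwedge^2\pi_\A^\sharp)(\omega_\A) = \pm\pi_\A$. Applying the cochain identity to $\omega_\A$ then yields
\begin{equation*}
\pi_\A^\sharp(d_\A\omega_\A) = \mp\,[\pi_\A,\pi_\A] = 0.
\end{equation*}
Because $\pi_\A$ is invertible, $\bigwedge^3\pi_\A^\sharp$ is injective, so $d_\A\omega_\A=0$.

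An alternative that avoids the cochain-map lemma is a direct formula. For $\alpha_i = \omega_\A^\flat(a_i)$, one expands
\begin{equation*}
d_\A\omega_\A(a_1,a_2,a_3)=\sum_{\mathrm{cyc}}\big(\rho(a_1)\,\omega_\A(a_2,a_3) - \omega_\A([a_1,a_2],a_3)\big).
\end{equation*}
Choosing local sections with $a_i = \pi_\A^\sharp(d_\A f_i)$, which span pointwise because $\pi_\A^\sharp$ is an isomorphism, the terms become $\{f_i,f_j\}$ and derivatives of them. The expression then reduces to the Jacobiator of the bracket $\{f,g\}=\pi_\A(d_\A f,d_\A g)$, which vanishes by $[\pi_\A,\pi_\A]=0$.

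The main obstacle is that on an algebroid, $d_\A f$ for $f \in C^\infty(M)$ need not span $\A^*$ pointwise, since the anchor may fail to be surjective. So I would run the derivation argument with general local frames $e_i$ of $\A^*$ rather than exact forms. Concretely, I would prove the cochain identity on $1$-forms via the algebroid Koszul bracket $[\alpha,\beta]_{\pi_\A}$. The needed fact is that $\pi_\A^\sharp$ is a bracket homomorphism,
\begin{equation*}
\pi_\A^\sharp[\alpha,\beta]_{\pi_\A}=[\pi_\A^\sharp\alpha,\pi_\A^\sharp\beta],
\end{equation*}
and this holds exactly when $[\pi_\A,\pi_\A]=0$. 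Establishing this with careful sign bookkeeping is the step I expect to take the most work.
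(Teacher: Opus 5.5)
Your argument is correct and follows the route the paper indicates. The paper's proof is only a citation (Proposition 72 of the author's earlier paper), but its remark just before the definition of symplectic Lie algebroids, that $\pi_\A^\sharp$ is an isomorphism of cochain complexes when $\pi_\A$ is nondegenerate, is exactly what you use: you transport $[\pi_\A,\pi_\A]=0$ to $d_\A\omega_\A=0$ via $\wedge^2\pi_\A^\sharp(\omega_\A)=\pm\pi_\A$ and injectivity of $\wedge^3\pi_\A^\sharp$.

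You are right to drop the exact-form shortcut. The claim in your alternative that the $\pi_\A^\sharp(d_\A f_i)$ span pointwise is false in general, for instance when $\rho=0$, where $d_\A f=0$. Checking the cochain identity on arbitrary $1$-forms via $\pi_\A^\sharp[\alpha,\beta]_{\pi_\A}=[\pi_\A^\sharp\alpha,\pi_\A^\sharp\beta]$, together with the standard Koszul-type formula for $[\pi_\A,X]$ with $X\in\Gamma(\A)$, closes that gap.
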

\begin{proof}
	See \cite{RANKIN2026102324}*{Proposition 72}.
\end{proof}
In light of this, we will call a Poisson algebroid \emph{symplectic} if $\pi_\A$ is non-degenerate, and refer  freely to $\omega_\A=\pi_\A^{-1}$ as described above. It was originally observed in \cite{nest1999deformationssymplecticliealgebroids} that any symplectic Lie algebroid $(\A,\rho,\pi_\A)$ over a manifold $M$ induces a Poisson structure on $M$ via the formula
\begin{equation*}
	\{f,g\}:= (\wedge^2 \rho_\A (\pi_\A))(df,dg).
\end{equation*}
The question driving this article is understanding which Poisson manifolds arise this way if one insists that $\rho$ is generically injective.
\subsection{Desingularization Algebroids}

\begin{definition}
	A Lie algebroid $(\A,\rho, [\cdot,\cdot])$ over $M$ is said to be \emph{almost-injective} if the anchor map $\rho:\A \ra TM$ is injective on an open dense set of $M$, or equivalently, if the induced map on sections (which we denote with the same name by abuse of notation) $\rho:\Gamma(\A) \ra \Gamma(TM)$ is injective.
\end{definition}

\begin{prop}\label{sectionmapsinject}
	If $(\A,\rho_\A, [\cdot,\cdot])$ is almost-injective, then the maps on sections $\wedge^k \rho: \Gamma(\bigwedge^k \A) \ra \Gamma(\bigwedge^k(TM))$ are injective for all $k \geq 0$.
\end{prop}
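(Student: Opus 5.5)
The plan is a pointwise argument on the open dense set where $\rho$ is injective, followed by continuity. Let $U \subset M$ denote this set (using the first formulation of almost-injectivity). For each $x \in U$ the linear map $\rho_x:\A_x \ra T_xM$ is injective. A standard fact of multilinear algebra is that if a linear map $L:V\ra W$ is injective, then $\wedge^k L:\bigwedge^k V \ra \bigwedge^k W$ is injective for every $k\geq 0$. To see this, choose a basis $e_1,\dots,e_m$ of $V$. The images $L e_1,\dots,L e_m$ are linearly independent, so we may extend them to a basis of $W$. Then $\wedge^k L$ sends the basis $\{e_{i_1}\wedge\cdots\wedge e_{i_k}\}$ of $\bigwedge^k V$ to a subset of the standard basis of $\bigwedge^k W$. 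An injective map on bases gives an injective linear map. Consequently $(\wedge^k\rho)_x=\wedge^k(\rho_x)$ is injective on the fibre $\bigwedge^k\A_x$ for every $x\in U$. The case $k=0$ is trivial, since then $\wedge^0\rho$ is the identity on $C^\infty(M)$.

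Next we pass to sections. Suppose $s\in\Gamma(\bigwedge^k\A)$ satisfies $\wedge^k\rho(s)=0$. Since the map on sections is applied pointwise, $(\wedge^k\rho)_x(s_x)=0$ for every $x\in M$. By the previous paragraph this forces $s_x=0$ for all $x\in U$. Now $s$ is a continuous section vanishing on the dense set $U$. The zero set of a continuous section is closed, so $s\equiv 0$ on $M$, and hence $\wedge^k\rho$ is injective on $\Gamma(\bigwedge^k\A)$.

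The only point requiring any care is the equivalence, asserted in the definition, between the open-dense formulation and injectivity on sections. If one starts instead from injectivity of $\rho$ on sections, one must first recover the open dense set $U$. This is where I expect the main, though modest, obstacle. The set of points where $\rho_x$ has maximal rank $\mathrm{rk}(\A)$ is open by lower semicontinuity of rank. Suppose it failed to be dense. Then there would be an open set $V$ on which $\rho_x$ is never injective. Shrinking $V$, we may assume $\rho$ has constant rank $<\mathrm{rk}(\A)$ on $V$; this uses that the points where the rank is locally maximal form an open dense set. On such a $V$ the kernel of $\rho$ is a nonzero smooth subbundle. A local nonzero section of this kernel, multiplied by a bump function supported in $V$, gives a nonzero global section in $\ker\rho$, contradicting injectivity on sections. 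With $U$ in hand, the two steps above complete the proof.
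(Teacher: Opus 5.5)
Your proof is correct and follows the paper's argument: $\wedge^k\rho_x$ is injective at each point of the open dense set where $\rho_x$ is injective, and a smooth section vanishing on a dense set vanishes identically. Your extra paragraph recovers the open dense set from injectivity on sections, using lower semicontinuity of rank, a constant-rank open subset, and a bump-function section of the kernel. It is also sound, though the paper does not prove that equivalence: it takes it as part of the definition and uses the open dense formulation directly.
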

\begin{proof}
	Let $S:=\{p \in M| \rho_p \text{ is injective}\}$, and fix $k \geq 0$, as well as $X \in \ker(\wedge^k \rho)$. If $p \in S$, then $(\wedge^k \rho)_p$ is injective as well, as it's the exterior power of an injective linear map between finite-dimensional vector spaces. The condition that $X \in \ker(\wedge^k \rho)$ in conjunction with injectivity of $\wedge^k \rho$ at points of $S$  forces $X$ to vanish on $S$ as well. Since $X$ is a smooth function vanishing on an open dense set, it must vanish globally, hence $\wedge^k \rho$ is injective as a map of sections.
\end{proof}
\begin{definition}
We say that a Poisson manifold $(M,\pi)$ is \emph{algebroid desingularizable} or \emph{desingularizable} if there exists a symplectic Lie algebroid $(\A,\rho_\A,\pi_\A)$ over $M$ with an almost-injective anchor such that
\begin{equation*}
	\pi = \wedge^2 \rho_\A (\pi_\A).
\end{equation*}
If such an algebroid exists, we refer to it as a \emph{desingularizing algebroid}\footnote{ Though such a Lie algebroid is necessarily integrable, see \cite{MR1973056}*{Corollary 5.9}, we will not discuss the integrating Lie groupoids in this article.} for $M$. If no such algebroid exists\footnote{There are topological obstructions to the existence of such algebroids, see \cite{MR4178876}.}, then we say that $(M,\pi)$ is \emph{non-desingularizable}.
\end{definition}
Symplectic manifolds are the prototypical example, where the algebroid can be taken to be the tangent bundle with the identity as the anchor map. In fact, we have the following:
\begin{prop}
    Let $(\A,\rho,\pi_\A)$ be a desingularizing algebroid for a Poisson manifold $(M,\pi)$, and suppose that $\pi$ is invertible, i.e. $M$ is symplectic. Then $\A \cong TM$ as Lie algebroids.
\end{prop}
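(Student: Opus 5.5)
The plan is to show directly that the anchor $\rho:\A\to TM$ is a vector bundle isomorphism. Then we check that it intertwines the brackets. A Lie algebroid morphism that is a bundle isomorphism covering the identity is an isomorphism of Lie algebroids.

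\textbf{Rank and pointwise factorization.} First, $\rho$ is injective on an open dense set and $\pi_\A$ is nondegenerate there, so $\operatorname{rank}\A$ is even. On that set $\rho$ maps $\A_p$ injectively into $T_pM$, so $\operatorname{rank}\A\le n$. Next, at each point $p$ the induced bivector factors as
\begin{equation*}
\pi_p^\sharp=\rho_p\circ(\pi_\A)_p^\sharp\circ\rho_p^*:T_p^*M\to T_pM.
\end{equation*}
This follows by unwinding $\pi=\wedge^2\rho(\pi_\A)$: for $\alpha,\beta\in T_p^*M$ we have $\pi(\alpha,\beta)=\pi_\A(\rho^*\alpha,\rho^*\beta)$.

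\textbf{Surjectivity, hence isomorphism.} Since $\pi$ is invertible, $\pi_p^\sharp$ is an isomorphism for every $p$. The factorization then forces $\rho_p$ to be surjective, so $\operatorname{rank}\A\ge n$. Combined with the bound above, $\operatorname{rank}\A=n$ and $\rho_p$ is a linear isomorphism at every point, not just generically. Hence $\rho$ is a smooth vector bundle isomorphism $\A\to TM$ covering the identity.

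\textbf{Brackets.} Every Lie algebroid anchor satisfies $\rho[a,b]_\A=[\rho a,\rho b]$. So $\rho$ is a Lie algebroid morphism, and being a bundle isomorphism, it is an isomorphism of Lie algebroids. It also carries $\pi_\A$ to $\pi$ by definition, and therefore $\omega_\A$ to the symplectic form $\pi^{-1}$. So the isomorphism is even one of symplectic Lie algebroids.

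The only step needing care is the pointwise factorization of $\pi^\sharp$ through $\rho$. This is routine linear algebra once the sign and transposition conventions are fixed. Note that Proposition~\ref{sectionmapsinject} is not actually needed here, because surjectivity alone upgrades the generic injectivity to everywhere.
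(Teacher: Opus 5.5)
Your proof is correct and takes essentially the same route as the paper: $\rho_p$ is surjective at every point because of the factorization $\pi^\sharp=\rho\circ\pi_\A^\sharp\circ\rho^*$, generic injectivity gives $\operatorname{rank}\A\le n$, so $\rho$ is a fiberwise isomorphism everywhere, and the anchor is a Lie algebroid morphism. Your extra remarks (that the rank is even, and that the isomorphism matches the symplectic forms) are harmless but not needed.
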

\begin{proof}
   First, we claim that for any $x \in M$, the map $\rho_x:\A_x \ra T_xM$ is surjective. Toward this, fix $v \in T_xM$, and note that as $\pi$ is invertible, the map $\pi_x^\sharp:T_x^*M \ra T_xM$ is a surjection, hence there is some $\alpha \in T_x^*M$ such that $v= \pi_x^\sharp(\alpha)$. Since $\rho(\pi_\A)=\pi$, we have that $v= \pi_x^\sharp(\alpha) = \rho_x((\pi_A^\sharp)_x((\rho^*)_x(\alpha)))$, thus $v \in \im(\rho_x)$ as claimed. From this, we have that $\mathrm{rank}(\A) \geq \mathrm{rank}(TM)$, and using almost-injectivity, we must then have that $\mathrm{rank}(\A)=\mathrm{rank}(TM)$, and so $\rho_x$ is a fiberwise isomorphism globally. Since $\rho$ is a morphism of Lie algebroids, the result follows.
\end{proof}
\begin{remark}
    In this way, a Poisson manifold that is symplectic cannot be desingularized using this algebroid procedure in any meaningful way. This is in the same spirit as \cite{klaasse2017geometricstructuresliealgebroids}*{Proposition 5.6.12}, and the remark preceding it.
\end{remark}

\begin{prop}\label{ArankBound}
If $(M,\pi)$ is desingularized by $(\A,\rho,\pi_\A)$, then we have 
\begin{equation*}
    \mathrm{rank}(\pi_x^\sharp) \leq \mathrm{rank}(\A) \qquad \forall x \in M,
\end{equation*}
with equality on the open dense set where $\rho_x$ is injective. Moreover, if $\mathcal{L}$ denotes the set of leaves of the symplectic foliation of $M$, then we have that
\begin{equation*}
    \mathrm{rank}(\A) = \max_{L \in \mathcal{L}}\{\dim(L)\}
\end{equation*}
\end{prop}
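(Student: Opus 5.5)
The plan is to factor $\pi^\sharp_x$ through the algebroid fiber. Since $\pi = \wedge^2\rho(\pi_\A)$, at each point $x \in M$ we have the identity already used in the preceding proposition:
\begin{equation*}
    \pi_x^\sharp = \rho_x \circ (\pi_\A^\sharp)_x \circ \rho_x^*, \qquad T_x^*M \xrightarrow{\rho_x^*} \A_x^* \xrightarrow{(\pi_\A^\sharp)_x} \A_x \xrightarrow{\rho_x} T_xM .
\end{equation*}
This is a pointwise linear-algebra check on decomposable elements, extended by linearity.

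\textbf{The inequality and the equality case.} Because $\pi_x^\sharp$ factors through the vector space $\A_x$, its rank is at most $\dim \A_x = \mathrm{rank}(\A)$, for every $x$. Now let $x$ lie in the open dense set $S$ on which $\rho_x$ is injective. Then $\rho_x^*$ is surjective, $(\pi_\A^\sharp)_x$ is an isomorphism by nondegeneracy, and $\rho_x$ is injective. The image of the composite is therefore $\rho_x(\A_x)$, which has dimension $\mathrm{rank}(\A)$. So equality holds on $S$.

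\textbf{The leaf statement.} The dimension of the symplectic leaf through $x$ equals $\mathrm{rank}(\pi_x^\sharp)$, because the leaf's tangent space at $x$ is $\im(\pi_x^\sharp)$. By the first part, every leaf has dimension at most $\mathrm{rank}(\A)$. Since $S$ is nonempty (it is dense in $M$, which is nonempty), any leaf through a point of $S$ attains $\mathrm{rank}(\A)$. Hence the maximum over $\mathcal{L}$ equals $\mathrm{rank}(\A)$.

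The only step needing care is the factorization identity itself, specifically checking the sign and transpose conventions in $\wedge^2\rho(\pi_\A)(\alpha,\beta)=\pi_\A(\rho^*\alpha,\rho^*\beta)$. The rank conclusions do not depend on these conventions, so I expect no real obstacle.
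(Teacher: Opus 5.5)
Your proposal is correct and follows the paper's proof essentially step for step. You factor $\pi_x^\sharp=\rho_x\circ(\pi_\A^\sharp)_x\circ\rho_x^*$ to bound the rank by $\mathrm{rank}(\A)$; the paper phrases this as $\im(\pi_x^\sharp)\subset\im(\rho_x)$. On the injectivity locus you then use surjectivity of $\rho_x^*$ and nondegeneracy of $\pi_\A$ to get $\im(\pi_x^\sharp)=\rho_x(\A_x)$, and read off the leaf dimensions from that.
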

\begin{proof}
    Let $U \subset M$ denote the open dense subset of $M$ on which $\rho_x$ is injective, and note that the condition $\wedge^2 \rho(\pi_\A) = \pi$ is equivalent to $\pi^\sharp = \rho \circ \pi_\A^\sharp \circ \rho^*$. From this, we see that $\im(\pi_x^\sharp) \subset \im(\rho_x)$ for any $x \in M$, and thus
    \begin{equation*}
        \mathrm{rank}(\pi_x^\sharp) = \dim(\im(\pi_x^\sharp)) \leq \dim(\im(\rho_x)) \leq \mathrm{rank}(\A).
    \end{equation*}
    If $x \in U$, then we have that $\rho_x^*$ is a surjection, and so
    \begin{equation*}
        \pi_x^\sharp(T_x^*M) =\rho_x((\pi_\A^\sharp)_x(\rho_x^*(T_x^*M))) = \rho_x((\pi_\A^\sharp)_x(\A_x^*)) = \rho_x(\A_x),
    \end{equation*}
    where the last equality follows from the nondegeneracy of $\pi_\A$. Since $\rho_x$ is injective, taking dimension on either side gives us that $\mathrm{rank}(\pi_x^\sharp)=\mathrm{rank}(\A)$. For such a point $x \in U$, the dimension of the symplectic leaf $L$ passing through $x$ has dimension $\mathrm{rank}(\pi_x^\sharp)=\mathrm{rank}(\A)$. In light of the inequality $\mathrm{rank}(\pi_x^\sharp) \leq \mathrm{rank}(\A)$ for any $x \in M$, we must then have that $\mathrm{rank}(\A) = \max_{L \in \mathcal{L}} \{\dim(L)\}$. 
\end{proof}
\begin{exmp}
    The first example of a non-symplectic desingularizable Poisson manifold is that of a \emph{regular} Poisson manifold. In this case, if  $\mathcal{F}$ denotes the symplectic foliation, then $T\mathcal{F}$ is a desingularizing algebroid, with the leafwise symplectic form being the algebroid symplectic form.
\end{exmp}
\begin{exmp}
	$b$-manifolds are a first example of where one needs the \emph{almost} in almost injective. A $b$-Poisson manifold carries the $b$-tangent bundle ${}^bTM$, a Lie algebroid whose anchor is the inclusion of $b$-vector fields into $TM$, which is injective away from the critical hypersurface $Z$, hence almost-injective. The original Poisson structure lifts to an invertible section ${}^b\pi \in \Gamma(\bigwedge^2{}^bTM)$, see \cite{Guillemin_2014}, and desingularizes the structure.
\end{exmp}
\begin{prop}\label{TrivialIsDesing}
    Let $(M,\pi)$ be a Poisson manifold with $\pi \equiv 0$. Then $(M,\pi)$ is desingularizable.
\end{prop}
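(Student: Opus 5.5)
The natural candidate is the zero algebroid: take $\A = M \times \{0\}$, the rank-zero vector bundle over $M$, with anchor $\rho = 0$, trivial bracket, and $\pi_\A = 0 \in \Gamma(\bigwedge^2 \A)$. The plan is to check in turn the three requirements of the definition of desingularizable: that this is a symplectic Lie algebroid, that its anchor is almost-injective, and that it induces $\pi$.

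First, $\A$ is a Lie algebroid. Its space of sections is $\Gamma(\A)=\{0\}$, so the Leibniz rule and the Jacobi identity hold vacuously, and $\rho$ is trivially a bracket-preserving bundle map.

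Second, $\A$ is symplectic. The bundle $\bigwedge^2 \A^*$ has rank zero, so $\omega_\A := 0$ is its only section. It is closed because $d_\A \omega_\A$ lies in $\Gamma(\bigwedge^3 \A^*) = \{0\}$. It is nondegenerate because $\omega_\A^\flat:\A \ra \A^*$ is a map between zero vector spaces and hence an isomorphism. Equivalently, $\pi_\A = 0$ satisfies $[\pi_\A,\pi_\A]=0$, and $\pi_\A^\sharp$ is an isomorphism $0 \ra 0$. So $(\A,\pi_\A)$ is a symplectic Poisson algebroid in the sense fixed above.

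Third, the anchor is almost-injective. At every point $x$, the map $\rho_x: \{0\} \ra T_xM$ is injective, so $\rho$ is injective on all of $M$, which is certainly open and dense. Finally, $\wedge^2\rho(\pi_\A) = 0 = \pi$, so $\A$ desingularizes $(M,\pi)$.

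There is no real obstacle here. The only point needing care is whether the conventions allow a rank-zero bundle as a symplectic Lie algebroid, that is, whether nondegeneracy of the zero form on the zero bundle is accepted. This is consistent with Proposition \ref{ArankBound}: since every leaf is a point, the rank of any desingularizing algebroid must be $\max_{L}\dim(L) = 0$, so the zero bundle is in fact the only possible desingularizing algebroid.
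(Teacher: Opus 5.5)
Your proposal is correct and matches the paper's proof. The paper uses the same rank-zero algebroid with $\rho=0$ and $\pi_\A=0$, notes that $\ker(\rho_x)=\{0\}$ at every point, and observes that $\wedge^2\rho(\pi_\A)=0=\pi$. Your extra checks (vacuous algebroid axioms, closedness, and nondegeneracy of the zero form on the zero bundle) spell out details the paper leaves implicit. Your closing remark is also correct: by Proposition~\ref{ArankBound}, any desingularizing algebroid for $\pi\equiv 0$ must have rank zero.
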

\begin{proof}
    Let $(\A,\rho,\pi_\A) = (\{0\} \times M, \rho = 0,\pi_\A = 0)$. We then have that for any $x \in M$, $\ker(\rho_x)=\{0\}$, so $\rho$ is almost-injective. We also have that 
    \begin{equation*}
        \wedge^2\rho(\pi_\A) = 0 = \pi_{},
    \end{equation*}
    hence $(\A,\pi_\A)$ is a desingularizing algebroid for $(M, \pi)$.
\end{proof}
\begin{remark}\label{AutomaticPoisson}
	If $(\A,\rho,[\cdot, \cdot])$ is an almost injective Lie algebroid over a Poisson manifold $(M,\pi)$, then any $\pi_\A \in \Gamma(\bigwedge^2 \A)$ such that $\wedge^2 \rho(\pi_\A)=\pi$ is automatically Poisson as we have that
	\begin{equation*}
		\rho([\pi_\A,\pi_\A]) = [\rho(\pi_\A),\rho(\pi_\A)] = [\pi,\pi]=0,
	\end{equation*}
where the first equality follows from the fact that $\rho$ is a morphism of Lie algebroids, thus $[\pi_\A,\pi_\A]=0$ by injectivity of $\rho$ on sections.
\end{remark}

\begin{prop}\label{DesingPreserveIso}
	Suppose that $(M,\pi_M)$ and $(N,\pi_N)$ are Poisson-diffeomorphic, and that $(M,\pi_M)$ is desingularizable. Then $(N,\pi_N)$ is desingularizable as well.
\end{prop}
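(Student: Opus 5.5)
The plan is to transport the desingularizing algebroid along the Poisson diffeomorphism. Let $\phi: M \ra N$ be a diffeomorphism with $\phi_*\pi_M = \pi_N$, and let $(\A,\rho,[\cdot,\cdot],\pi_\A)$ be a desingularizing algebroid for $(M,\pi_M)$. I would define a Lie algebroid $\mathcal{B}$ over $N$ as the pushforward of $\A$. As a vector bundle, $\mathcal{B} := (\phi^{-1})^*\A$, so that $\mathcal{B}_y = \A_{\phi^{-1}(y)}$, with the canonical bundle isomorphism $\Phi:\A \ra \mathcal{B}$ covering $\phi$. The anchor is $\rho_{\mathcal{B}} := d\phi \circ \rho \circ \Phi^{-1}$. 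On sections, the bracket is $[s,t]_{\mathcal{B}} := \Phi_*[\Phi^* s, \Phi^* t]$, where $\Phi^*s := \Phi^{-1}\circ s \circ \phi$. The bivector is $\pi_{\mathcal{B}} := (\wedge^2\Phi)(\pi_\A)$, suitably composed with $\phi^{-1}$ on the base.

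I would verify three things in order.

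\textbf{First}, $\mathcal{B}$ is a Lie algebroid and $\Phi$ is an isomorphism of Lie algebroids over $\phi$. Bilinearity, antisymmetry and Jacobi transfer directly, since $\Phi^*$ is a linear bijection on sections. For the Leibniz rule, take $f\in C^\infty(N)$ and use $\Phi^*(fs) = (f\circ\phi)\Phi^*s$ together with
\[
\rho(\Phi^* s)(f\circ \phi) = (\rho_{\mathcal{B}}(s) f)\circ\phi,
\]
which is just the $\phi$-relatedness of $\rho(\Phi^*s)$ and $\rho_{\mathcal{B}}(s)$.

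\textbf{Second}, $\rho_{\mathcal{B}}$ is almost-injective. At each $y$ it equals $d\phi_{\phi^{-1}(y)}\circ\rho_{\phi^{-1}(y)}\circ\Phi^{-1}_y$, a composition with isomorphisms. So it is injective exactly on $\phi(U)$, where $U$ is the open dense injectivity set of $\rho$, and $\phi(U)$ is open and dense because $\phi$ is a homeomorphism.

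\textbf{Third}, $\pi_{\mathcal{B}}$ is nondegenerate and Poisson, and it induces $\pi_N$. Nondegeneracy is fiberwise and is preserved by the linear isomorphism $\wedge^2\Phi_y$. For the pushforward, naturality of $\wedge^2$ gives
\[
\wedge^2\rho_{\mathcal{B}}(\pi_{\mathcal{B}}) = \wedge^2 d\phi\,(\wedge^2\rho(\pi_\A)) = \phi_*\pi_M = \pi_N .
\]
Once this holds, Remark~\ref{AutomaticPoisson} gives $[\pi_{\mathcal{B}},\pi_{\mathcal{B}}]=0$ for free from almost-injectivity, so no Schouten computation is needed. Alternatively, one can note that $\Phi$ intertwines the Schouten brackets.

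The only place requiring care is the first step: writing the transported bracket correctly and checking the Leibniz rule with the right composition with $\phi$. I expect this to be routine bookkeeping rather than a genuine obstacle. Since everything is an isomorphism, the whole argument amounts to the statement that desingularizing data is natural under diffeomorphisms.
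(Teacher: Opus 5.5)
Your proposal is correct and follows the same route as the paper, which simply regards $\A$ as a bundle over $N$ via $\varphi$ with anchor $d\varphi\circ\rho_\A$ and checks almost-injectivity and $\wedge^2(d\varphi\circ\rho_\A)(\pi_\A)=\pi_N$. Your version differs only in being more explicit: you spell out the transported bracket and the Leibniz rule, and you check nondegeneracy and the Poisson condition (via Remark~\ref{AutomaticPoisson}), all of which the paper leaves implicit.
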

\begin{proof}
	Let $(\A,\rho_\A,\pi_\A)$ be a desingularizing algebroid for $(M,\pi_M)$, and let $\varphi:M \ra N$ be a Poisson-diffeomorphism. Viewing $\A$ as a Lie algebroid over $N$ with anchor map $d\varphi \circ \rho_\A$, almost-injectivity is immediate as $d\varphi$ is a fiberwise isomorphism. Since $\varphi$ is a Poisson map, we have
    \begin{equation*}
        \wedge^2(d\varphi \circ \rho_\A)(\pi_\A) = \wedge^2d\varphi(\pi_M)=\pi_N.
    \end{equation*}
\end{proof}

\begin{prop}\label{ProductsAreDesing}
	If $(M_1,\pi_1)$ and $(M_2,\pi_2)$ are both desingularizable, then so too is $(M_1 \times M_2, \pi_1 \oplus \pi_2)$.
\end{prop}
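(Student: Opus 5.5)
The plan is to take the external product of the two desingularizing algebroids. Let $(\A_1,\rho_1,\pi_{\A_1})$ and $(\A_2,\rho_2,\pi_{\A_2})$ desingularize $(M_1,\pi_1)$ and $(M_2,\pi_2)$. Let $p_i: M_1\times M_2 \ra M_i$ be the projections. We form $\A := p_1^*\A_1 \oplus p_2^*\A_2 = \A_1 \times \A_2 \ra M_1 \times M_2$ with anchor $\rho=\rho_1\times\rho_2$. This is the standard product Lie algebroid. Its bracket is determined on pulled-back sections by $[p_1^*a,p_1^*a']=p_1^*[a,a']_1$, $[p_2^*b,p_2^*b']=p_2^*[b,b']_2$ and $[p_1^*a,p_2^*b]=0$. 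It is extended to all sections, which are $C^\infty(M_1\times M_2)$-combinations of these, by the Leibniz rule using $\rho$. I would cite the standard construction of the product Lie algebroid rather than reverify the Jacobi identity. The Jacobi identity reduces to the generators because both $\rho_1\times\rho_2$ and the brackets are compatible with the splitting $T(M_1\times M_2)=TM_1\times TM_2$.

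Next I would check almost-injectivity. Let $U_i\subset M_i$ be the open dense sets where $\rho_i$ is injective. On $U_1\times U_2$, which is open and dense in $M_1\times M_2$, the map $\rho_{(x,y)}=(\rho_1)_x\oplus(\rho_2)_y$ is injective.

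For the bivector, take $\pi_\A := \pi_{\A_1}\oplus\pi_{\A_2}$, meaning the sum of the pullbacks of $\pi_{\A_i}$ into $\Gamma(\bigwedge^2\A)$ via the inclusions $p_i^*\A_i\hookrightarrow\A$. Fiberwise this is block-diagonal with nondegenerate blocks, so it is nondegenerate. Moreover $\wedge^2\rho(\pi_\A)=\wedge^2\rho_1(\pi_{\A_1})+\wedge^2\rho_2(\pi_{\A_2})=\pi_1\oplus\pi_2$. The condition $[\pi_\A,\pi_\A]=0$ then follows for free from Remark \ref{AutomaticPoisson}, since $\A$ is almost-injective and $\pi_\A$ maps to the Poisson bivector $\pi_1\oplus\pi_2$. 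So $\A$ is a symplectic Lie algebroid desingularizing the product.

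The main obstacle is only the well-definedness of the product algebroid bracket. Everything else is fiberwise linear algebra together with Remark \ref{AutomaticPoisson}.
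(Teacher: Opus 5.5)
Your proposal is correct and follows the same route as the paper: you take the product algebroid $\A_1\times\A_2$ with the product symplectic structure, and its anchor $\rho_1\times\rho_2$ carries that structure to $\pi_1\oplus\pi_2$. The only differences are minor: you get $[\pi_\A,\pi_\A]=0$ from Remark \ref{AutomaticPoisson} rather than citing closedness of $\mathrm{pr}_1^*\omega_1+\mathrm{pr}_2^*\omega_2$, and you name the open dense set $U_1\times U_2$ for almost-injectivity, which is more precise than the paper's phrase ``both $\rho_i$ were injective.''
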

\begin{proof}
	Suppose that $(\A_i,\rho_i,\omega_i)$ for $i \in \{1,2\}$ is a desingularizing algebroid for $(M_i,\pi_i)$. Then by  \cite{RANKIN2026102324}*{Proposition 63}, we have that $\A :=\A_1\times \A_2$ is a symplectic Lie algebroid over $M_1 \times M_2$ with $\A$-symplectic form given by $\omega := \mathrm{pr}_1^* \omega_1 + \mathrm{pr}_2^* \omega_2$ where $\mathrm{pr}_i:M_1 \times M_2 \ra M_i$ is projection. Let $\pi$ denote the Poisson bivector dual to $\omega$, and note that $\wedge^2\rho_{\A}(\pi)=\pi_1 \oplus \pi_2$. Since both $\rho_i$ were injective, it follows that $\rho_{\A}$ is as well, and thus $\A$ is a desingularizing algebroid for the product.
\end{proof}

For a fixed manifold $M$, let $\mathcal{SLA}_{ai}$ denote the collection of symplectic Lie algebroids over $M$ with almost-injective anchor, and let $\mathcal{M}_\pi$ denote the moduli space of Poisson bivectors on $M$ up to Poisson-diffeomorphism. As a consequence of Proposition \ref{DesingPreserveIso}  we have a well-defined map
\begin{equation*}
    \mathscr{D}_{ai}:\mathcal{SLA}_{ai} \ra \mathcal{M}_\pi
\end{equation*} given by sending a symplectic Lie algebroid to the induced Poisson structure on $M$. In light of this, the desingularization process of a given Poisson structure $\pi$ on $M$ can be viewed as a choice of $(\A,\rho, \pi_\A) \in (\mathscr{D}_{ai})^{-1}([\pi])$.
\begin{question}
    Can one characterize the image of $\mathscr{D}_{ai}$?
\end{question}
 In the following section we answer this for linear Poisson structures, but we can see that this mapping is not surjective now:
\begin{exmp}[Non-example]\label{NonexampleBump}
	Consider $\R^2$ with the Poisson bivector $f \p_x \wedge \p_y$ where $f$ is a bump function supported on the unit disc. By Proposition \ref{ArankBound}, a desingularizing algebroid would force $\mathrm{rank}(\pi_x^\sharp)=\mathrm{rank}(\A)$ to be constant on the open dense set on which the anchor injects; however $\mathrm{rank}(\pi_x^\sharp)=2$ on the open disc, and $0$ outside it, thus the rank is non-constant on any open dense set of $\R^2$.
\end{exmp}

\subsection{Local Criteria}
Though not all Poisson structures are globally desingularizable, they can be locally.
\begin{definition}\label{LocalDesingDef}
    We say that $(M,\pi)$ is \emph{locally desingularizable at $p$} if there exists an open set $U \ni p$ and a symplectic Lie algebroid $(\A,\rho,\pi_\A)$ over $U$ with almost injective anchor such that $\wedge^2\rho(\pi_\A)=\pi|_U$.
\end{definition}
\begin{prop}
    If $(M,\pi)$ is desingularizable, then it is locally desingularizable at every point.
\end{prop}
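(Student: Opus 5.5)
The plan is to restrict a global desingularizing algebroid to an arbitrary open neighbourhood of $p$; taking $U=M$ in Definition \ref{LocalDesingDef} already works formally, but we want any open $U \ni p$. Let $(\A,\rho,\pi_\A)$ be a desingularizing algebroid for $(M,\pi)$, fix $p \in M$ and an open set $U \ni p$, and consider $\A|_U \ra U$.

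First, $\A|_U$ is a Lie algebroid over $U$. The anchor is $\rho|_U$. Because Lie algebroid brackets are local (by the Leibniz rule, $[X,Y]$ on an open set depends only on $X,Y$ there), the bracket on $\Gamma(\A|_U)$ is defined as follows. For $X,Y \in \Gamma(\A|_U)$ and $q \in U$, pick a bump function $\chi$ supported in $U$ with $\chi \equiv 1$ near $q$, and set $[X,Y](q) := [\chi X, \chi Y](q)$. Locality makes this independent of $\chi$, and the anchor, Leibniz and Jacobi identities are inherited pointwise. This locality argument is the only point needing care; it is standard, and I expect it to be the main, albeit mild, obstacle.

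Second, we check the remaining conditions on $U$.
\begin{enumerate}
\item Almost-injectivity: if $S \subset M$ is the open dense set where $\rho$ is injective, then $S \cap U$ is open and dense in $U$, since $U$ is open.
\item Nondegeneracy: $\pi_\A|_U$ is pointwise nondegenerate, so it is still invertible.
\item Closedness: $d_{\A|_U}(\omega_\A|_U) = (d_\A \omega_\A)|_U = 0$, because the differential is local. Alternatively, $[\pi_\A|_U,\pi_\A|_U]=0$ follows from Remark \ref{AutomaticPoisson} applied on $U$.
\item Compatibility: $\wedge^2\rho|_U(\pi_\A|_U) = (\wedge^2\rho(\pi_\A))|_U = \pi|_U$, since $\wedge^2\rho$ is a bundle map and hence commutes with restriction.
\end{enumerate}

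Together these show that $(\A|_U,\rho|_U,\pi_\A|_U)$ satisfies Definition \ref{LocalDesingDef}, so $(M,\pi)$ is locally desingularizable at $p$.
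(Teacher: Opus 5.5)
Your proof is correct, but it takes a more general route than the paper. The paper's proof is the one-line observation you make and then set aside: Definition \ref{LocalDesingDef} only asks for \emph{some} open $U \ni p$, so taking $U=M$ with the global desingularizing algebroid itself settles the proposition.

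Your restriction argument proves something stronger. It uses locality of the bracket via bump functions, density of $S\cap U$ in $U$, locality of $d_\A$, and the fact that $\wedge^2\rho$ commutes with restriction. Together these show that a desingularizing algebroid restricts to a desingularizing algebroid on \emph{every} open subset. Consequently the neighbourhood in Definition \ref{LocalDesingDef} can be taken arbitrarily small, which makes local desingularizability a genuinely germ-level notion.

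The paper's argument yields only the bare implication global $\Rightarrow$ local. That is all that is used later: the semisimple-obstruction corollary needs nothing more than this implication. So your extra work is sound but not needed for the statement as posed.
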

\begin{proof}
    Suppose $(\A,\rho,\pi_\A)$ desingularizes $(M,\pi)$, fix $p \in M$, and take $U=M$ as in definition \ref{LocalDesingDef}.
\end{proof}
Locally desingularizing algebroids need not be isomorphic, or even share the same rank:
\begin{exmp}
    Let $(M,\pi)$ be as in Example \ref{NonexampleBump}. If we choose $p$ outside the unit disc, then there is an open set $U$ containing $p$ on which $f$ identically vanishes, thus the Poisson structure is given by the zero bivector. By Proposition \ref{TrivialIsDesing}, the zero algebroid is a local desingularization on $U$. If we choose $p$ to be the origin, then there is an open neighborhood $U$ on which the bivector is identically $\p_x \wedge\p_y$, which is invertible on $U$, hence $T\R^2|_U$ with the same bivector is a local desingularization.
\end{exmp}
Since global desingularizability implies local desingularizability, we investigate local obstructions.
\begin{definition}
    Let $(M,\pi)$ be a Poisson manifold. We say that $x \in M$ is an \emph{isotropy point} if $\pi_x$ vanishes.
\end{definition}
 If $x$ is an isotropy point, $T^*_xM$ inherits a bracket from the Poisson bracket on $T^*M$, explicitly given by $[d_xf,d_xg]_\pi=d_x\{f,g\}$ \footnote{this is the isotropy Lie algebra of the cotangent Lie algebroid's anchor}. We will refer to $T_x^*M$ equipped with this bracket as $\mathfrak{g}_x$. Note that if $\Phi = \pi_\A^\sharp \circ \rho^*$, we have a well-defined map at any isotropy point $x \in M$ given by
\begin{align*}
    \Phi_x:\mathfrak{g}_x &\ra \ker(\rho_x),
\end{align*}
as $\rho_x(\Phi_x(\alpha))=\pi^\sharp_x(\alpha) =0$. Note that $\ker(\rho_x)$ is also a Lie algebra with the isotropy Lie bracket.
\begin{lemma}\label{AbelianIdeal} 
If $x \in M$ is an isotropy point, then $\ker(\Phi_x)$ is an abelian ideal of $\mathfrak{g}_x$.
\end{lemma}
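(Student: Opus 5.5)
The plan is to first identify $\ker(\Phi_x)$ concretely, then prove that it is an ideal by showing that $\Phi_x$ is a Lie algebra homomorphism, and finally prove that it is abelian by a direct computation using that $\pi$ factors through $\rho^*$.

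\emph{Identifying the kernel.} Since $\pi_\A$ is nondegenerate, $(\pi_\A^\sharp)_x$ is an isomorphism. Hence
\begin{equation*}
\ker(\Phi_x)=\ker(\rho_x^*)=\operatorname{ann}(\im \rho_x)\subset T_x^*M .
\end{equation*}
Every $\alpha\in T^*_xM$ is of the form $d_xf$, and we have $\rho^*(df)=d_\A f$, where $d_\A$ is the algebroid differential. So $\alpha=d_xf$ lies in $\ker(\Phi_x)$ exactly when the section $d_\A f$ of $\A^*$ vanishes at $x$.

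\emph{Ideal property.} I will show that $\Phi_x:\mathfrak{g}_x\ra\ker(\rho_x)$ is a Lie algebra homomorphism; its kernel is then automatically an ideal. Globally, set $X_f:=\Phi(df)=\pi_\A^\sharp(d_\A f)$, the $\A$-Hamiltonian section of $f$. The induced bracket is $\{f,g\}=\pi_\A(d_\A f,d_\A g)$. The standard Poisson identity in the algebroid $(\A,\pi_\A)$ gives
\begin{equation*}
[X_f,X_g]_\A=X_{\{f,g\}_\A}=X_{\{f,g\}}.
\end{equation*}
This identity follows from $[\pi_\A,\pi_\A]=0$ in exactly the same way as for ordinary Poisson manifolds, for instance because $\pi_\A^\sharp$ intertwines the Koszul bracket on $\A^*$ with the bracket on $\A$, while $[d_\A f,d_\A g]_{\pi_\A}=d_\A\{f,g\}$.

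At an isotropy point $x$ we have $\rho(X_f)_x=\pi^\sharp_x(d_xf)=0$, so all $X_f$ take values in $\ker\rho_x$ at $x$. The isotropy bracket on $\ker(\rho_x)$ is defined by $[a_x,b_x]:=[a,b]_x$ for sections extending $a_x,b_x$. It is well defined because, in the Leibniz rule, $[a,hb]=h[a,b]+\rho(a)(h)\,b$, the anchor term vanishes at $x$. Evaluating the identity above at $x$ therefore gives
\begin{equation*}
[\Phi_x(d_xf),\Phi_x(d_xg)]=\Phi_x(d_x\{f,g\})=\Phi_x([d_xf,d_xg]_\pi).
\end{equation*}
The one point to check carefully here is that $\Phi_x(d_xf)=(X_f)_x$ depends only on $d_xf$. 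This holds because $\Phi$ is $C^\infty$-linear.

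\emph{Abelian property.} This is the step I expect to carry the real content. Take $\alpha=d_xf$ and $\beta=d_xg$ in $\ker(\Phi_x)$, so that the sections $s:=d_\A f$ and $t:=d_\A g$ of $\A^*$ both vanish at $x$. Then
\begin{equation*}
[\alpha,\beta]_\pi=d_x\{f,g\}=d_x\big(\pi_\A(s,t)\big).
\end{equation*}
In a local frame of $\A^*$, the function $\pi_\A(s,t)$ is a sum $\sum_{ij} \pi^{ij}s_i t_j$, where the coefficient functions $s_i,t_j$ all vanish at $x$. Each term is a product of at least two functions vanishing at $x$, so its differential at $x$ is zero. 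Hence $[\alpha,\beta]_\pi=0$, and $\ker(\Phi_x)$ is abelian.
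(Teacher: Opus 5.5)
Your proof is correct. The abelian step is essentially the paper's argument. Both proofs identify $\ker(\Phi_x)=\ker(\rho_x^*)$ and observe that $\{f,g\}=\pi_\A(\rho^*df,\rho^*dg)$ is, in a local frame, a sum of terms each containing two factors that vanish at $x$. The paper restricts to chart-linear functions and invokes Hadamard's lemma to exhibit the second-order vanishing. You note that the product rule alone already kills $d_x$ of such a product, which is slightly cleaner.

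The genuine difference is in the homomorphism step. The paper works on $M$: it uses that $\pi^\sharp:T^*M\ra TM$ is a Lie algebroid morphism, writes $\rho(\Phi[\alpha,\beta]_\pi)=\rho[\Phi\alpha,\Phi\beta]$, and then cancels $\rho$ using injectivity on sections, i.e.\ almost-injectivity. You work on $\A$. You use the algebroid Hamiltonian identity $[X_f,X_g]_\A=X_{\{f,g\}}$, which follows from $[\pi_\A,\pi_\A]=0$, together with $\rho^*df=d_\A f$ and $\{f,g\}_\A=\{f,g\}$ (the latter from $\wedge^2\rho(\pi_\A)=\pi$). You also verify explicitly that the isotropy bracket on $\ker(\rho_x)$ is well defined, which the paper only asserts.

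The trade-off is as follows. The paper's route needs only the classical fact about $T^*M$, at the cost of invoking almost-injectivity. Yours needs the algebroid analogue, namely that $\pi_\A^\sharp$ is a morphism from $(\A^*,[\cdot,\cdot]_{\pi_\A})$ to $\A$, which is standard. In exchange, your argument shows the lemma holds for any symplectic Lie algebroid inducing $\pi$, almost-injective or not. Almost-injectivity is then needed only later, in Proposition~\ref{DoubleBound}, where $\mathrm{rank}(\A)\le n$ forces $\ker(\Phi_x)\neq 0$.
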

\begin{proof}
    We first claim that $\Phi_x$ is a morphism of Lie algebras. Toward this, let $V$ be an open set containing $x$, and let $\alpha,\beta \in \Gamma(T^*V)$. Since $\pi^\sharp$ is a morphism of Lie algebroids, on the level of sections we have that
    \begin{equation*}
        \rho(\Phi([\alpha,\beta]_\pi))=\pi^\sharp([\alpha,\beta]_\pi) = [\pi^\sharp(\alpha),\pi^\sharp(\beta)] = [\rho(\Phi(\alpha)), \rho(\Phi(\beta))] = \rho([\Phi(\alpha),\Phi(\beta)]).
    \end{equation*}
    Since $\rho$ is injective on sections, $\Phi$ preserves brackets. Isotropy bracket values are independent of local extension, so choosing constant local extensions of $\alpha_x,\beta_x$ gives $\Phi_x([\alpha_x,\beta_x]_\pi) = [\Phi_x(\alpha_x),\Phi_x(\beta_x)]$. From this, we see that $\Phi_x$ is a morphism of Lie algebras, hence $\ker(\Phi_x)$ is an ideal of $\mathfrak{g}_x$. Note that $(\pi_\A^\sharp)_x$ is invertible, so $\ker(\Phi_x)=\ker(\rho^*_x)$, so it suffices to check that the bracket vanishes for arbitrary elements $u_0,v_0 \in \ker(\rho_x^*)$ to show $\ker(\Phi_x)$ is abelian. Fix such $u_0,v_0$, and a chart $U$ centered at $x$. Let $\ell_u,\ell_v$ be the linear functions in $U$ with $d_p\ell_u=u_0,d_p\ell_v=v_0$ for all $p \in U$, i.e. are constant $1$-forms in $U$. Define
    \begin{equation*}
        f:=\{\ell_u,\ell_v\} = \pi(d\ell_u,d\ell_v) = \pi_\A(\rho^*(d\ell_u),\rho^*(d\ell_v)).
    \end{equation*}
    After possibly shrinking $U$, choose a local frame of $\A^*|_U$, $\{e^1, \dots ,e^k\}$, and let $\phi^\alpha \in C^\infty(U)$ be such that 
    \begin{equation*}
        \rho_p^*(d\ell_u) = \sum_\alpha \phi^\alpha(p)e_p^\alpha \qquad \forall p \in U.
    \end{equation*}
    Since $\rho_x^*(d \ell_u)=0$, we must have that $\phi^\alpha(x)=0$ for all $\alpha$, hence by Hadamard's lemma there are smooth functions $h_i^\alpha \in C^\infty(U)$ such that
    \begin{equation*}
        \phi^\alpha(p) = \sum_i p_i h_i^\alpha(p) \qquad \forall  p \in U.
    \end{equation*}
    Setting $g_i^u = \sum_\alpha h_i^\alpha e^\alpha$, we have that
    \begin{equation*}
        \rho_p^*(d\ell_u) = \sum_\alpha\left(\sum_i p_ih_i^\alpha(p)\right)e^\alpha_p = \sum_i p_i g_i^u(p) \qquad \forall p \in U.
    \end{equation*}
    We analogously find $g_j^v(p)$ such that
    \begin{equation*}
        f(p) =(\pi_\A)_p(\rho_p^*(d\ell_u),\rho_p^*(d\ell_v)) = \sum_{i,j}p_ip_j(\pi_A)_p(g_i^u(p),g_j^v(p)) \qquad \forall p \in U,
    \end{equation*}
    thus $f$ vanishes to order $2$ at $x$. From this, we must have that $d_xf$ vanishes, and so
    \begin{equation*}
        0 = d_xf = d_x\{\ell_u,\ell_v\} = [d_x\ell_u,d_x\ell_v]_\pi = [u_0,v_0],
    \end{equation*}
    showing $\ker(\Phi_x)$ is abelian.
\end{proof}
We claim that the ideal $\ker(\Phi_x)$ is nonzero.
\begin{lemma}\label{isotropic-rank}
Let $B$ be a bilinear form on a real vector space $V$ of dimension $n$, and
let $W \subseteq V$ be an isotropic subspace, i.e., for all $w_1,w_2 \in W$, we have that $B(w_1,w_2) = 0$. Then $\operatorname{rank}(B) \leq 2(n-\dim(W))$.
\end{lemma}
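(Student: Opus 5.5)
The plan is to view $B$ as a linear map $B^\flat:V\to V^*$, $v\mapsto B(v,\cdot)$, so that $\operatorname{rank}(B)=\dim\im(B^\flat)$, and to bound this rank by splitting $V$ along $W$. Set $k=\dim(W)$ and choose a complement $U$ with $V=W\oplus U$, so $\dim(U)=n-k$. Pick a basis $w_1,\dots,w_k,u_1,\dots,u_{n-k}$ adapted to this splitting. In this basis the Gram matrix of $B$ has block form
\begin{equation*}
    \begin{pmatrix} B|_{W\times W} & B|_{W\times U}\\ B|_{U\times W} & B|_{U\times U}\end{pmatrix}
    =\begin{pmatrix} 0 & P\\ Q & R\end{pmatrix},
\end{equation*}
where the upper-left $k\times k$ block vanishes by the isotropy of $W$.

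The key step is then a rank count on this block matrix. Write it as
\begin{equation*}
    \begin{pmatrix} 0 & P\\ 0 & R\end{pmatrix}+\begin{pmatrix} 0 & 0\\ Q & 0\end{pmatrix}.
\end{equation*}
The first summand has only $n-k$ nonzero columns, so its rank is at most $n-k$. The second has only $n-k$ nonzero rows, so its rank is also at most $n-k$. Subadditivity of rank gives $\operatorname{rank}(B)\le 2(n-k)$, which is the claimed bound.

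The same argument can be phrased intrinsically. Compose $B^\flat$ with restriction to $W$ to get $V\to W^*$. Its restriction to $W$ is zero, so it factors through $V/W$, and hence has rank at most $n-k$. The kernel of the restriction map $V^*\to W^*$ is the annihilator $W^0$, which has dimension $n-k$. Combining these, $\dim\im(B^\flat)\le \dim W^0+(n-k)=2(n-k)$.

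There is no real obstacle here. The only care needed is to note that $B$ need not be symmetric or skew, so $P$ and $Q$ are unrelated and must be bounded separately; the splitting into a column-supported part and a row-supported part handles this without using any symmetry.
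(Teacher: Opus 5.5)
Your proof is correct and is essentially the paper's argument. The paper sets $\phi(x)=B(x,\cdot)$, notes $\phi(W)\subseteq W^\circ$ so that $\dim\phi(W)\le n-k$, and uses $\ker(\phi|_W)=W\cap\ker\phi$ to get $\operatorname{rank}(B)\le \dim\phi(W)+(n-k)$. This is your intrinsic argument with restriction of the domain to $W$ in place of post-composition with $V^*\to W^*$; in your block picture, it bounds the top $k$ rows $(0\;P)$ by $n-k$ and then adds the remaining $n-k$ rows, and, as you observe, neither version needs any symmetry of $B$.
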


\begin{proof}
Let $\phi\colon V \to V^*$ be the linear map $\phi(x) = B(x,\,\cdot\,)$, so that
$\operatorname{rank}(B) = \dim\operatorname{im}(\phi)$ and $\dim\ker(\phi) = n - \operatorname{rank}(B)$.
Set $ \operatorname{im}(\phi|_W) =:W'\subseteq V^*$. Since
$W$ is isotropic, $(\phi(w_1))(w_2)=0$ for all
$w_i \in W$, so $W' \subset W^\circ$ and hence
\begin{equation*}
    \dim(W') \leq \dim(W^\circ)= n - \dim(W).
\end{equation*}
On the other hand $\ker(\phi|_W) = W \cap \ker(\phi) \subseteq \ker(\phi)$, so
\begin{equation*}
    \dim(W') = \dim(W) - \dim \ker(\phi|_W) \geq \dim( W) - \dim \ker(\phi) = \dim(W) - (n-\mathrm{rank}(B)).
\end{equation*}
Combining the two inequalities, we see
\begin{equation*}
    \dim (W) - (n-\mathrm{rank}(B)) \leq n-\dim(W),
\end{equation*}
thus $\mathrm{rank}(B) \leq 2(n- \dim(W))$.
\end{proof}

\begin{prop}\label{abelianidealrankbound}
    Let $\mathfrak{a}$ be an abelian subalgebra of a finite-dimensional Lie algebra $\mathfrak{g}$. Then $\dim(\mathfrak{a}) \leq n-r$, where $2r=\max_{\mathcal{O}\in \mathcal{L}}\{\dim(\mathcal{O})\}$ where $\mathcal{L}$ denotes the collection of coadjoint orbits.
\end{prop}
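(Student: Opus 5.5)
The plan is to apply Lemma \ref{isotropic-rank} to the Kirillov form at a point of a maximal coadjoint orbit. Fix $\xi \in \mathfrak{g}^*$ whose coadjoint orbit $\mathcal{O}_\xi$ has dimension $2r$. Consider the skew-symmetric bilinear form
\begin{equation*}
B_\xi(x,y) := \xi([x,y]), \qquad x,y \in \mathfrak{g}.
\end{equation*}

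The first step is to identify the rank of $B_\xi$. The map $x \mapsto B_\xi(x,\cdot) = -\mathrm{ad}^*_x\xi$ (up to sign convention) has kernel $\mathfrak{g}_\xi$, the stabilizer of $\xi$. Hence
\begin{equation*}
\mathrm{rank}(B_\xi) = n - \dim(\mathfrak{g}_\xi) = \dim(\mathfrak{g}\cdot \xi) = \dim(T_\xi\mathcal{O}_\xi) = 2r.
\end{equation*}
This uses the standard fact that the tangent space to the coadjoint orbit at $\xi$ is the image of the infinitesimal coadjoint action.

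The second step is to note that $\mathfrak{a}$ is isotropic for $B_\xi$. Indeed, for $a_1,a_2 \in \mathfrak{a}$ we have $[a_1,a_2]=0$, so $B_\xi(a_1,a_2)=\xi(0)=0$.

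The third step applies Lemma \ref{isotropic-rank} with $V=\mathfrak{g}$ and $W=\mathfrak{a}$. This gives $2r = \mathrm{rank}(B_\xi) \leq 2(n-\dim(\mathfrak{a}))$, that is, $\dim(\mathfrak{a}) \leq n-r$.

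The only point that needs care is the first step, namely the orbit-dimension identification $\dim \mathcal{O}_\xi = \dim(\mathrm{ad}^*_{\mathfrak{g}}\xi)$. Here we take $\mathcal{O}_\xi$ to be the orbit of the connected group integrating $\mathfrak{g}$. Equivalently, it is the symplectic leaf of the KKS structure through $\xi$. Its tangent space is $\pi^\sharp_\xi(T^*_\xi\mathfrak{g}^*) = \{\mathrm{ad}^*_x\xi : x\in\mathfrak{g}\}$, since the KKS bivector at $\xi$ is exactly $B_\xi$ under the identification $T^*_\xi\mathfrak{g}^* \cong \mathfrak{g}$. With this identification, $\mathrm{rank}(B_\xi) = \mathrm{rank}(\pi_\xi^\sharp) = 2r$ directly, and the rest is immediate.
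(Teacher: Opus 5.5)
Your proposal is correct and matches the paper's proof. The paper also takes the Kirillov form $B_\xi$ (with the opposite sign, which is immaterial) at a point of a maximal coadjoint orbit, and identifies its rank with $\dim\mathcal{O}_\xi = 2r$ via the infinitesimal coadjoint action; it then notes that $\mathfrak{a}$ is $B_\xi$-isotropic and concludes with Lemma \ref{isotropic-rank}.
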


\begin{proof}
Fix $\xi \in \mathfrak{g}^*$ and let $B_\xi$ be the skew-symmetric bilinear form
on $\mathfrak{g}$ given by $B_\xi(x,y) = -\xi([x,y])$. The tangent space to the coadjoint orbit
through $\xi$ is the image of $x \mapsto \operatorname{ad}^*_x \xi$, thus
$\operatorname{rank}(B_\xi) = \dim\mathcal{O}_\xi$. Fix $\xi$ with
$\operatorname{rank}(B_\xi) = 2r$ maximal. Since $\mathfrak a$ is an abelian
subalgebra, $B_\xi(a_1,a_2)= 0$ for all $a_i \in \mathfrak a$, so
$\mathfrak a$ is isotropic for $B_\xi$. By Lemma~\ref{isotropic-rank}, $2r = \operatorname{rank}(B_\xi) \le 2\big(n - \dim\mathfrak a\big)$, from which the result follows.
\end{proof}
\begin{prop}\label{DoubleBound}
 Let $(M,\pi)$ be an $n$-dimensional Poisson manifold, and $x \in M$ be an isotropy point. If $\A$ locally desingularizes $\pi$ on an open set $U \ni x$, we have that
 \begin{equation*}
     n - \frac{1}{2}\mathrm{rank}(\A) \leq \dim(\ker(\Phi_x)) \leq n-r_x,
 \end{equation*}
 Where $r_x$ as in the statement of Proposition \ref{abelianidealrankbound} for the Lie algebra $\mathfrak{g}_x$. Moreover, we have that $\ker(\Phi_x)$ is a nonzero abelian ideal.
\end{prop}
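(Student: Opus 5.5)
The upper bound is immediate from the earlier results. By Lemma \ref{AbelianIdeal}, $\ker(\Phi_x)$ is an abelian ideal of $\mathfrak{g}_x$, and in particular an abelian subalgebra of $\mathfrak{g}_x = T_x^*M$, which has dimension $n$. Proposition \ref{abelianidealrankbound}, applied to $\mathfrak{g}_x$, then gives $\dim(\ker(\Phi_x)) \leq n - r_x$.

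For the lower bound, recall from the proof of Lemma \ref{AbelianIdeal} that $(\pi_\A^\sharp)_x$ is invertible, so $\ker(\Phi_x) = \ker(\rho_x^*)$. Rank--nullity gives
\begin{equation*}
\dim\ker(\rho_x^*) = n - \mathrm{rank}(\rho_x) \geq n - \dim\ker(\rho_x)^\perp .
\end{equation*}
The crude estimate $\mathrm{rank}(\rho_x)\le \mathrm{rank}(\A)$ only yields $n-\mathrm{rank}(\A)$, so we must show $\mathrm{rank}(\rho_x) \leq \tfrac12 \mathrm{rank}(\A)$. Let $K = \ker(\rho_x)\subseteq \A_x$. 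I claim $K$ is coisotropic for the nondegenerate form $(\pi_\A)_x$ on $\A_x^*$, in the following sense. Its annihilator $K^\circ = \im(\rho_x^*)\subseteq \A_x^*$ is isotropic for $(\pi_\A)_x$. Indeed, for $\alpha,\beta\in T_x^*M$,
\begin{equation*}
(\pi_\A)_x(\rho_x^*\alpha,\rho_x^*\beta) = \pi_x(\alpha,\beta) = 0,
\end{equation*}
since $x$ is an isotropy point. An isotropic subspace of a symplectic vector space of dimension $\mathrm{rank}(\A)$ has dimension at most $\tfrac12\mathrm{rank}(\A)$. One could also get this from Lemma \ref{isotropic-rank} with $B=(\pi_\A)_x$, whose rank is $\mathrm{rank}(\A)$. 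Hence $\mathrm{rank}(\rho_x)=\dim\im(\rho_x^*)\le \tfrac12\mathrm{rank}(\A)$, and therefore
\begin{equation*}
\dim\ker(\Phi_x) = n-\mathrm{rank}(\rho_x^*) \geq n-\tfrac12\mathrm{rank}(\A).
\end{equation*}

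For nonvanishing, note that $\rho_x:\A_x\to T_xM$ has image of dimension at most $\tfrac12\mathrm{rank}(\A)$. Since $\A$ desingularizes $\pi$ on $U$, Proposition \ref{ArankBound} (applied over $U$) gives $\mathrm{rank}(\A)=\mathrm{rank}(\pi_y^\sharp)\le n$ for generic $y\in U$. Hence $n-\tfrac12\mathrm{rank}(\A)\geq n-\tfrac n2 = \tfrac n2>0$ whenever $n\ge1$, so $\ker(\Phi_x)\neq0$. Together with Lemma \ref{AbelianIdeal}, this shows $\ker(\Phi_x)$ is a nonzero abelian ideal.

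The only substantive step is the isotropy of $\im(\rho_x^*)$, which improves the trivial bound to the half-rank bound. It follows directly from $\pi_x=0$, so I expect no real obstacle.
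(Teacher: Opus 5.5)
Your proof is correct and is essentially the paper's argument: the isotropy of $\im(\rho_x^*)$ for $(\pi_\A)_x$, which follows from $\pi_x=0$, gives $\mathrm{rank}(\rho_x^*)\le\frac12\mathrm{rank}(\A)$ and hence the lower bound by rank--nullity. As in the paper, the upper bound comes from Lemma~\ref{AbelianIdeal} together with Proposition~\ref{abelianidealrankbound}, and nonvanishing follows from $\mathrm{rank}(\A)\le n$ (as a minor point, your displayed inequality $n-\mathrm{rank}(\rho_x)\ge n-\dim\ker(\rho_x)^\perp$ is actually an equality, and the coisotropy phrasing is unnecessary since only the isotropy of $\im(\rho_x^*)$ is used).
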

\begin{proof}
    We first claim that $\im(\rho_x^*) \subset \A_x^*$ is isotropic with respect to $(\pi_\A)_x$. Toward this, fix $\alpha,\beta \in T_x^*M$, and letting $\langle \cdot,\cdot \rangle$ denote the dual pairing, we have
    \begin{equation*}
        (\pi_\A)_x(\rho_x^*(\alpha),\rho_x^*(\beta)) = \langle  (\rho \circ \pi_\A^\sharp \circ \rho^*)_x(\alpha),\beta \rangle = \langle \pi_x^\sharp(\alpha),\beta \rangle = 0,
    \end{equation*}
    where the last equality follows from $x$ being an isotropy point. Since $\im(\rho_x^*)$ is isotropic, $\mathrm{rank}(\rho_x^*) \leq \frac{1}{2}\mathrm{rank}(\A)$, and as $(\pi_\A)_x$ is invertible, $\mathrm{rank}(\Phi_x) = \mathrm{rank}(\rho_x^*)$. Using rank-nullity, we then have that
    \begin{equation*}
        n = \dim(\ker(\Phi_x)) + \mathrm{rank}(\Phi_x) = \dim(\ker(\Phi_x)) + \mathrm{rank}(\rho_x^*) \leq  \dim(\ker(\Phi_x)) + \frac{1}{2}\mathrm{rank}(\A),
    \end{equation*}
    which upon subtracting gives the lower bound. On the other hand, by Proposition \ref{abelianidealrankbound}, we must have that $\dim(\ker(\Phi_x)) \leq n-r_x$ since $\ker(\Phi_x)$ is an abelian ideal, establishing the claimed inequalities. To see that $\ker(\Phi_x)$ is nonzero, using Proposition \ref{ArankBound}, we have that $\mathrm{rank}(\A)=\max_{p \in U}\mathrm{rank}(\pi_p^\sharp) \leq n$. Rearranging the established lower bound we have that
    \begin{align*}
        n \leq \dim(\ker(\Phi_x)) + \frac{1}{2}\mathrm{rank}(\A),
    \end{align*}
    which now substituting $\mathrm{rank}(\A) \leq n$ provides
    \begin{equation*}
        n \leq \dim(\ker(\Phi_x)) + \frac{1}{2}\mathrm{rank}(\A) \leq \dim(\ker(\Phi_x)) +\frac{n}{2}.
    \end{equation*}
    Subtracting then gives us that $\dim(\ker(\Phi_x)) \geq \frac{n}{2}$, in particular $\ker(\Phi_x) \neq  \{0\}$.
\end{proof}
\begin{corollary}\label{isotropyPointNonzeroIdeal}
     If $(M,\pi)$ is locally desingularizable at an isotropy point $x \in M$, then $\mathfrak{g}_x$ contains a nonzero abelian ideal.
\end{corollary}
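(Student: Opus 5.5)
This is a direct consequence of Proposition \ref{DoubleBound} and Lemma \ref{AbelianIdeal}. Assume $(M,\pi)$ is locally desingularizable at the isotropy point $x$. Then there is an open set $U \ni x$ and a symplectic Lie algebroid $(\A,\rho,\pi_\A)$ over $U$ with almost-injective anchor and $\wedge^2\rho(\pi_\A)=\pi|_U$. The point $x$ is still an isotropy point of $(U,\pi|_U)$. Moreover, the isotropy Lie algebra of $U$ at $x$ coincides with $\mathfrak{g}_x$, because the bracket $[d_xf,d_xg]_\pi=d_x\{f,g\}$ only depends on germs at $x$. So we can work entirely on $U$, and we have the map $\Phi_x=(\pi_\A^\sharp)_x\circ\rho_x^*:\mathfrak{g}_x\ra\ker(\rho_x)$.

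\textbf{Step 1: $\ker(\Phi_x)$ is an abelian ideal.} Apply Lemma \ref{AbelianIdeal} to the algebroid over $U$. The lemma's proof is entirely local: it uses injectivity of $\rho$ on sections over a neighborhood $V\subset U$, together with a chart and Hadamard's lemma. So it applies verbatim to the locally desingularizing algebroid.

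\textbf{Step 2: the ideal is nonzero.} Invoke Proposition \ref{DoubleBound}. Its argument runs as follows.
\begin{enumerate}
\item Since $\pi_x=0$, the subspace $\im(\rho_x^*)$ is isotropic for the nondegenerate form $(\pi_\A)_x$, which gives $\mathrm{rank}(\rho_x^*)\le \tfrac12\mathrm{rank}(\A)$.
\item Because $(\pi_\A^\sharp)_x$ is invertible, $\mathrm{rank}(\Phi_x)=\mathrm{rank}(\rho_x^*)$.
\item Proposition \ref{ArankBound}, applied on $U$, gives $\mathrm{rank}(\A)\le n$.
\end{enumerate}
Rank–nullity then yields
\begin{equation*}
\dim\ker(\Phi_x)\;\ge\; n-\tfrac12\mathrm{rank}(\A)\;\ge\; \tfrac n2 .
\end{equation*}
This is positive provided $n\ge1$; the case $n=0$ is vacuous, or degenerate, and can be set aside. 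Hence $\ker(\Phi_x)$ is a nonzero abelian ideal of $\mathfrak{g}_x$.

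\textbf{Main point to check.} The only potential subtlety is whether the earlier results, stated for global desingularizations, really apply to a local one on $U$. Proposition \ref{ArankBound} is stated for a desingularization of a manifold, and $U$ is such a manifold, though possibly disconnected. Its proof only uses pointwise algebra together with density of the injectivity locus, so the bound $\mathrm{rank}(\pi_p^\sharp)\le\mathrm{rank}(\A)\le n$ holds on $U$ regardless of connectedness. If $U$ is disconnected, the rank of $\A$ may differ between components, so I would shrink $U$ to the connected component containing $x$. With that, the corollary follows immediately.
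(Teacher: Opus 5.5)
Your proposal is correct and follows the paper's own route: the corollary is read off from Proposition \ref{DoubleBound}, whose proof combines Lemma \ref{AbelianIdeal} with the isotropy of $\im(\rho_x^*)$ and the bound $\mathrm{rank}(\A)\le n$ from Proposition \ref{ArankBound} to get $\dim\ker(\Phi_x)\ge n/2$. Your locality and connectedness remarks are harmless extra care. One correction: the case $n=0$ is not vacuous but a genuine trivial exception, since a point with $\pi=0$ is desingularizable (Proposition \ref{TrivialIsDesing}) while $\mathfrak{g}_x=0$; the paper's argument tacitly assumes $n\ge 1$, just as yours does.
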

\begin{corollary}
    Let $(M,\pi)$ be a Poisson manifold. If there exists an isotropy point $x$ with $\mathfrak{g}_x$ semisimple, then $(M,\pi)$ is not locally desingularizable at $x$, hence not globally desingularizable.
\end{corollary}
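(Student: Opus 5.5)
The plan is to argue by contradiction, combining Corollary \ref{isotropyPointNonzeroIdeal} with the standard structure theory of semisimple Lie algebras. Suppose $x$ is an isotropy point with $\mathfrak{g}_x$ semisimple, and suppose $(M,\pi)$ is locally desingularizable at $x$. Then there is an open set $U \ni x$ and a symplectic Lie algebroid $(\A,\rho,\pi_\A)$ over $U$ with almost-injective anchor and $\wedge^2\rho(\pi_\A)=\pi|_U$. The point $x$ is still an isotropy point of $(U,\pi|_U)$. Its isotropy Lie algebra is unchanged, since the bracket $[d_xf,d_xg]_\pi = d_x\{f,g\}$ depends only on germs at $x$. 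Hence Corollary \ref{isotropyPointNonzeroIdeal} (equivalently, Proposition \ref{DoubleBound}) gives a nonzero abelian ideal $\ker(\Phi_x) \subset \mathfrak{g}_x$.

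Next, I will recall the standard fact that a real finite-dimensional semisimple Lie algebra has no nonzero abelian ideals. By definition (or by Cartan's criterion), semisimplicity means the radical is zero. Any abelian ideal $\mathfrak{a}$ is solvable, so $\mathfrak{a}$ is contained in the radical, forcing $\mathfrak{a}=0$. This contradicts $\ker(\Phi_x)\neq 0$, so $(M,\pi)$ is not locally desingularizable at $x$.

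Finally, global desingularizability implies local desingularizability at every point, by the proposition preceding Definition \ref{LocalDesingDef}'s example. Therefore $(M,\pi)$ is not globally desingularizable either.

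The only step requiring care is that the local desingularization over $U$ really meets the hypotheses used in Lemma \ref{AbelianIdeal} and Proposition \ref{DoubleBound}. These hypotheses are injectivity of $\rho$ on sections over open subsets and the rank bound $\mathrm{rank}(\A)\le \dim U$. Both hold for any almost-injective algebroid over $U$ via Proposition \ref{sectionmapsinject} and Proposition \ref{ArankBound} applied on $U$. Since these are already packaged into Corollary \ref{isotropyPointNonzeroIdeal}, no real obstacle remains.
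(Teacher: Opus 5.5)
Your proposal is correct and matches the paper's proof: assume a local desingularization at $x$, use Corollary \ref{isotropyPointNonzeroIdeal} to get the nonzero abelian ideal $\ker(\Phi_x)$ of $\mathfrak{g}_x$, and contradict semisimplicity. You also make the local-to-global step explicit (desingularizable implies locally desingularizable at every point), which the paper leaves implicit.
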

\begin{proof}
    Suppose a locally desingularizing algebroid $(\A,\rho,\pi_\A)$ exists. By Corollary \ref{isotropyPointNonzeroIdeal} $\ker(\Phi_x)$ is a nonzero abelian ideal of $\mathfrak{g}_x$. If $\mathfrak{g}_x$ is semisimple, no such ideal can exist, a contradiction.
\end{proof}

\section{Linear Poisson Structures}
Given a real finite-dimensional Lie algebra $\mathfrak{g}$, one can associate a canonical Poisson structure to the dual $\mathfrak{g}^*$, by defining the Poisson bracket for $f,g \in C^\infty(\mathfrak{g}^*)$ as
\begin{equation*}
    \{f,g\}(\xi) := \langle \xi, [d_\xi f, d_\xi g] \rangle
\end{equation*}
for any $\xi \in \mathfrak{g}^*$. This is commonly referred to as the Kirillov-Kostant-Souriau or KKS Poisson structure, and the symplectic leaves of the foliation agree with the coadjoint orbits. After choosing a basis of $\mathfrak{g}$, one obtains structure constants $C_{ij}^k$ where $[e_i,e_j]=C_{ij}^ke_k$ which we can use to write the KKS Poisson bivector. If  $x_i$ are linear coordinates on $\mathfrak{g}^*$, then we have
\begin{equation*}
    \pi_{\mathfrak{g}^*} = \sum_{i<j} C_{ij}^k x_k \p_{x_i}\wedge \p_{x_j}.
\end{equation*}
\begin{definition}
    We say that a real finite-dimensional Lie algebra $\mathfrak{g}$ is \emph{desingularizable} if $(\mathfrak{g^*}, \pi_{\mathfrak{g}^*})$ is desingularizable as a Poisson manifold, and that it is \emph{non-desingularizable} otherwise.
\end{definition}
The goal of this section is to prove the main result, which we state now, and dedicate the rest of the section to proving.
\begin{theorem}\label{MainThm}
    Let $\mathfrak{g}$ be a real finite dimensional Lie algebra of dimension $n$, with $2r=\max_{\mathcal{O} \in \mathcal{L}}\{\dim(\mathcal{O})\}$ where $\mathcal{L}$ is the set of coadjoint orbits. Then $\mathfrak{g}^*$ is desingularizable if and only if there exists an abelian ideal $\mathfrak{a}$ of $\mathfrak{g}$ of dimension $n-r$.
\end{theorem}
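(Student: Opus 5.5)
Necessity follows from the local obstruction at the origin. Sufficiency is an explicit construction of an action-type algebroid on $\mathfrak{g}^*$ built from the abelian ideal.

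\emph{Necessity.} Suppose $(\A,\rho,\pi_\A)$ desingularizes $\mathfrak{g}^*$.
\begin{enumerate}
\item The origin $0\in\mathfrak{g}^*$ is an isotropy point, and $\mathfrak{g}_0=T_0^*\mathfrak{g}^*\cong\mathfrak{g}$ as Lie algebras, since $d_0\{x_i,x_j\}=C_{ij}^k e_k$.
\item By Proposition \ref{ArankBound}, $\mathrm{rank}(\A)=2r$.
\item Proposition \ref{DoubleBound} then gives $n-r\le\dim\ker(\Phi_0)\le n-r$.
\item By Lemma \ref{AbelianIdeal}, $\ker(\Phi_0)$ is an abelian ideal of $\mathfrak{g}$ of dimension exactly $n-r$.
\end{enumerate}

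\emph{Sufficiency.} Let $\mathfrak{a}$ be an abelian ideal of dimension $n-r$. Choose a complement $\mathfrak{b}$ with $\mathfrak{g}=\mathfrak{b}\oplus\mathfrak{a}$ and $\dim\mathfrak{b}=r$. Use linear coordinates $y_1,\dots,y_r$ dual to a basis $b_i$ of $\mathfrak{b}$, and $z_1,\dots,z_{n-r}$ dual to a basis $a_\alpha$ of $\mathfrak{a}$.
\begin{enumerate}
\item Since $[\mathfrak{a},\mathfrak{a}]=0$ and $[\mathfrak{g},\mathfrak{a}]\subset\mathfrak{a}$, the bracket $\{z_\alpha,z_\beta\}$ vanishes, and $\{y_i,z_\alpha\}$ is linear in the $z$'s only.
\item Hence $\pi=\sum_{i,\alpha}\{y_i,z_\alpha\}\partial_{y_i}\wedge\partial_{z_\alpha}+\sum_{i<j}\{y_i,y_j\}\partial_{y_i}\wedge\partial_{y_j}$.
\item Set $X_i:=\pi^\sharp(dy_i)$. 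These vector fields involve only linear coefficients, and span the image of $\pi^\sharp$ restricted to $\mathrm{span}(dy_i)$.
\item The brackets satisfy $[X_i,X_j]=\pi^\sharp(d\{y_i,y_j\})$. Here $\{y_i,y_j\}$ is a linear function involving both $y$ and $z$ coordinates, so $\{X_i\}$ is not obviously closed under bracket. The better choice is the Lie algebroid generated by the $r$ fields $\partial_{y_i}$ together with the $r$ fields $Y_i:=\sum_\alpha\{y_i,z_\alpha\}\partial_{z_\alpha}$.
\item Here the $Y_i$ are the infinitesimal coadjoint action of $b_i$ on $\mathfrak{a}^*$, so they commute up to the coadjoint action of $[b_i,b_j]$, which lands in $\mathfrak{b}+\mathfrak{a}$. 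One must verify that $\{\partial_{y_i},Y_j\}$ generate a Lie algebroid.
\end{enumerate}

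The cleaner route is to realize $\A$ as an action algebroid. The $z$-coordinates (the $\mathfrak{a}^*$ part) are Casimir-like modulo $\mathfrak{g}$-action. The rank-$r$ bundle spanned by the $Y_i$ is the action algebroid $\mathfrak{g}/\mathfrak{a}\cong\mathfrak{b}$ acting on $\mathfrak{a}^*$, which is well defined because $\mathfrak{a}$ acts trivially on $\mathfrak{a}^*$ (it is abelian). Pulling back to $\mathfrak{g}^*\to\mathfrak{a}^*$ and taking the direct sum with the vertical directions $\partial_{y_i}$ gives a rank-$2r$ algebroid. Equivalently, $\A$ is the subsheaf of vector fields spanned by $\partial_{y_i}$ and the $Y_i$; one checks closure via the formula $[Y_i,Y_j]=\sum c_{ij}^k Y_k$ with $c_{ij}^k$ the structure constants of $\mathfrak{g}/\mathfrak{a}$.
\begin{enumerate}
\item Write $\pi$ in the frame $\{\partial_{y_i},Y_i\}$:
\[
\pi=\sum_i \partial_{y_i}\wedge Y_i+\sum_{i<j}\{y_i,y_j\}\,\partial_{y_i}\wedge\partial_{y_j}.
\]
Its coefficient matrix is block $\begin{pmatrix}*&I\\-I&0\end{pmatrix}$, which is invertible everywhere. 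This defines $\pi_\A$.
\item The anchor is injective wherever the $Y_i$ are linearly independent. That set is open, and it is nonempty: at generic points $\mathrm{rank}\,\pi=2r$ forces the $Y_i$ to be independent. By analyticity (the coefficients are polynomial) it is then dense.
\item By Remark \ref{AutomaticPoisson}, $\pi_\A$ is automatically Poisson, hence symplectic.
\end{enumerate}

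The main obstacle is checking that the frame is genuinely closed under bracket with smooth structure functions, i.e.\ that $[\partial_{y_i},Y_j]=0$ and $[Y_i,Y_j]\in\mathrm{span}\{Y_k\}$. This holds because $Y_j$ depends only on $z$, and the $Y$'s form a representation of $\mathfrak{g}/\mathfrak{a}$ on $\mathfrak{a}^*$. The well-definedness of that representation is exactly where the hypothesis that $\mathfrak{a}$ is an abelian ideal is used.
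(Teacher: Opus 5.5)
Your proposal is correct and follows the paper's proof. Necessity is the same argument at the origin: Proposition \ref{ArankBound} gives $\mathrm{rank}(\A)=2r$, Proposition \ref{DoubleBound} pins $\dim\ker(\Phi_0)=n-r$, and Lemma \ref{AbelianIdeal} makes it an abelian ideal. Sufficiency uses the same ingredients as the paper: the frame $\{\partial_{y_i},Y_i\}$, the block-form $\pi_\A$, the relations $[\partial_{y_i},Y_j]=0$ and $[Y_i,Y_j]=\sum_k c_{ij}^kY_k$, density of the set where the $Y_i$ are independent, and Remark \ref{AutomaticPoisson}. Your description of $\A$ as $T\mathfrak{b}^*\times((\mathfrak{g}/\mathfrak{a})\ltimes\mathfrak{a}^*)$ is only a conceptual repackaging: it gives the Lie algebroid axioms for free, whereas the paper defines the bracket as $\rho^{-1}([\rho(s),\rho(t)])$ after proving injectivity on sections and involutivity, but the two algebroids are the same.
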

\begin{remark}
    In contrast, if one drops the insistence of $\rho$ being almost-injective, one can always find a symplectic Lie algebroid inducing a given linear Poisson structure, see \cite{MR4421028}*{Section 2.3}.
\end{remark}
\subsection{Necessary Condition}
We will use the identification between $(T_0^*\mathfrak{g}^*, [\cdot,\cdot]_\pi)$ and $(\mathfrak{g}, [\cdot , \cdot])$, which we recall here.
\begin{prop}\label{IsoT^*gtog}
    There is an isomorphism of Lie algebras $(T_0^*\mathfrak{g}^*, [\cdot,\cdot]_\pi) \cong (\mathfrak{g}, [\cdot , \cdot])$.
\end{prop}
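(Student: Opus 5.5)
The plan is to use the canonical identification $T_0^*\mathfrak{g}^* \cong (\mathfrak{g}^*)^* \cong \mathfrak{g}$ of vector spaces, sending a covector $d_0 f$ to the element of $\mathfrak{g}$ it represents. Concretely, for $v \in \mathfrak{g}$ let $\ell_v \in C^\infty(\mathfrak{g}^*)$ denote the linear function $\ell_v(\xi) = \langle \xi, v\rangle$. Since $d_\xi \ell_v = v$ for every $\xi$, the map
\begin{equation*}
    \Psi:\mathfrak{g} \ra T_0^*\mathfrak{g}^*, \qquad \Psi(v) = d_0 \ell_v
\end{equation*}
is a linear isomorphism: the differentials at $0$ of linear functions exhaust $T_0^*\mathfrak{g}^*$, and $d_0\ell_v = 0$ forces $v=0$.

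Before using the isotropy bracket at $0$, I would check that $0$ is an isotropy point. This holds because $\pi_{\mathfrak{g}^*}$ is linear in the coordinates $x_k$, so $\pi_0 = 0$. The bracket $[\cdot,\cdot]_\pi$ on $T_0^*\mathfrak{g}^*$ is therefore defined by $[d_0 f, d_0 g]_\pi = d_0\{f,g\}$, and this value is independent of the chosen representatives.

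The key computation is to evaluate this bracket on linear functions. From the KKS formula, since $d_\xi\ell_v = v$ and $d_\xi \ell_w = w$,
\begin{equation*}
    \{\ell_v,\ell_w\}(\xi) = \langle \xi, [v,w]\rangle = \ell_{[v,w]}(\xi),
\end{equation*}
so $\{\ell_v,\ell_w\} = \ell_{[v,w]}$ as functions. Taking $d_0$ of both sides gives
\begin{equation*}
    [\Psi(v),\Psi(w)]_\pi = d_0\{\ell_v,\ell_w\} = d_0\ell_{[v,w]} = \Psi([v,w]),
\end{equation*}
so $\Psi$ is a Lie algebra isomorphism.

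The only point needing care is well-definedness of the isotropy bracket, namely that $d_0\{f,g\}$ depends only on $d_0 f$ and $d_0 g$. I would handle this as in the proof of Lemma \ref{AbelianIdeal}. Write $f = \ell_{d_0 f} + \tilde f$ with $\tilde f$ vanishing to order $2$ at $0$. The bracket $\{\tilde f, g\}(\xi) = \langle \xi, [d_\xi\tilde f, d_\xi g]\rangle$ then vanishes to order $2$ at $0$: the factor $\xi$ vanishes at $0$, and $d_\xi \tilde f$ vanishes at $0$. Hence its differential at $0$ is zero, and the same argument applies to $g$. This step is routine, so I expect no real obstacle.
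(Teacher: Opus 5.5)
Your proof is correct and is essentially the paper's argument: the paper uses the same map $x\mapsto d_0\hat{x}$ and the same identity $\{\hat{x},\hat{y}\}=\widehat{[x,y]}$, then takes $d_0$ of both sides. Your added well-definedness check is fine, and the paper takes it for granted; in your decomposition you should also subtract the constant $f(0)$, which is harmless since constants Poisson-commute with everything.
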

\begin{proof}
First, note that $T_0^*\mathfrak{g}^*$ is a Lie algebra with bracket inherited from the Poisson bracket  $[\cdot,\cdot]_\pi$ on $T^*\mathfrak{g}^*$ since $T_0^*\mathfrak{g}^*$ is the isotropy Lie algebra of $\pi_0^\sharp$. For any $x \in \mathfrak{g}$, let $\hat{x} \in C^\infty(\mathfrak{g}^*)$ denote the map $\hat{x}(p) = \langle p,x\rangle$ where the angle brackets denote the dual pairing between $\mathfrak{g}$ and $\mathfrak{g}^*$. Define a map
    \begin{align*}
        \varphi: \mathfrak{g} &\ra T_0^*\mathfrak{g}^* \\
        x &\mapsto (d\hat{x})_0.
    \end{align*}
    This map is linear and injective, and hence an isomorphism of vector spaces. Note that since $\hat{x}$ is linear, we have that for any $\xi \in \mathfrak{g}$, $(d\hat{x})_\xi =(d\hat{x})_0$ since the derivative is constant. Moreover, that constant is $x$, since $(d\hat{x})_0(v)=\langle v,x\rangle$, and so $x$ and $(d\hat{x})_0$ are identified under the isomorphism $(\mathfrak{g}^*)^* \cong \mathfrak{g}$. Now fix $x,y \in \mathfrak{g}$ and note that we have that
    \begin{align*}
        \varphi([x,y]) = d_0(\widehat{[x,y]}) =d_0(\{\hat{x}, \hat{y}\})= [d_0\hat{x}, d_0\hat{y}]_{\pi}=[\varphi(x),\varphi(y)],
    \end{align*}
    thus $\varphi$ is an isomorphism of Lie algebras.
\end{proof}
Note that for any linear Poisson structure, we have that $0 \in \mathfrak{g}^*$ is always an isotropy point, and so by Lemma \ref{AbelianIdeal}, we have that $\ker(\Phi_0))$ is an abelian ideal of $T_0^*\mathfrak{g}^* \cong \mathfrak{g}$. We now identify the dimension of $\dim(\ker(\Phi_0))$. 
\begin{prop}\label{kerphidim}
    $\dim(\ker(\Phi_0)) = n-r$.
\end{prop}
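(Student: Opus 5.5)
The implicit setting is that $\A$ desingularizes $(\mathfrak{g}^*,\pi_{\mathfrak{g}^*})$ and $\Phi_0=(\pi_\A^\sharp)_0\circ\rho_0^*$ is the map at the isotropy point $0$. I will get the equality by sandwiching $\dim\ker(\Phi_0)$ between two bounds that both equal $n-r$, using Proposition \ref{DoubleBound} at $x=0$ with $U=\mathfrak{g}^*$. That proposition gives
\begin{equation*}
    n-\tfrac{1}{2}\mathrm{rank}(\A)\leq \dim(\ker(\Phi_0))\leq n-r_0 .
\end{equation*}
So it suffices to show that $\mathrm{rank}(\A)=2r$ and that $r_0=r$.

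For the rank, I invoke Proposition \ref{ArankBound}. It gives $\mathrm{rank}(\A)=\max_{L\in\mathcal{L}}\dim(L)$, where $\mathcal{L}$ is the set of symplectic leaves of $\pi_{\mathfrak{g}^*}$. For the KKS structure the symplectic leaves are exactly the coadjoint orbits, as recalled at the start of this section. Hence $\mathrm{rank}(\A)=2r$, and the lower bound becomes $n-r$.

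For the upper bound, $r_0$ is defined through the maximal coadjoint orbit dimension of the isotropy algebra $\mathfrak{g}_0=(T_0^*\mathfrak{g}^*,[\cdot,\cdot]_\pi)$. By Proposition \ref{IsoT^*gtog} this Lie algebra is isomorphic to $\mathfrak{g}$. Isomorphic Lie algebras have isomorphic coadjoint representations, via the dual of the isomorphism, so their maximal orbit dimensions agree and $r_0=r$. Alternatively, one can bypass $r_0$ entirely: $\ker(\Phi_0)$ is an abelian ideal by Lemma \ref{AbelianIdeal}, and it transports to an abelian subalgebra of $\mathfrak{g}$ under the isomorphism $\varphi$. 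Proposition \ref{abelianidealrankbound} applied to $\mathfrak{g}$ then bounds its dimension by $n-r$. Combining the two bounds gives $\dim(\ker(\Phi_0))=n-r$.

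There is no serious obstacle. The only point needing care is making sure the $r$ used in Proposition \ref{DoubleBound} (computed for $\mathfrak{g}_0$) is the same as the $r$ in the statement (computed for $\mathfrak{g}$), and that $\mathrm{rank}(\A)$ is taken globally on $U=\mathfrak{g}^*$. Both points are handled by the identifications above.
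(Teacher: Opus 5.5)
Your proposal is correct and matches the paper's proof: apply Proposition~\ref{DoubleBound} at $0$, identify $r_0=r$ through the Lie algebra isomorphism $T_0^*\mathfrak{g}^*\cong\mathfrak{g}$, and use Proposition~\ref{ArankBound} to get $\mathrm{rank}(\A)=2r$, so the two bounds coincide. Your alternative route to the upper bound via Lemma~\ref{AbelianIdeal} and Proposition~\ref{abelianidealrankbound} just unpacks how the upper bound in Proposition~\ref{DoubleBound} is itself proved.
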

\begin{proof}
By Proposition \ref{IsoT^*gtog} the isotropy Lie algebra $ \mathfrak{g}_0\cong T_0^*\mathfrak{g}^*$, thus $r_0=r$. Proposition \ref{DoubleBound}, applied at $0$ gives
\begin{equation*}
     n-\frac{1}{2}\mathrm{rank}(A) \leq \dim\ker(\Phi_0) \leq n-r.
\end{equation*}
Since $A$ desingularizes $\mathfrak g^*$, Proposition \ref{ArankBound} implies that
$\mathrm{rank}(A)=\max_{\mathcal{O} \in \mathcal{L}}\dim \mathcal{O}$, where $\mathcal{L}$ is the set of coadjoint orbits, whose maximal dimension is $2r$ by definition; hence
$\mathrm{rank}(A)=2r$. The lower bound above is then
$n-r\le\dim\ker(\Phi_0)$, thus $\dim\ker(\Phi_0)=n-r$.
\end{proof}
\begin{theorem}\label{onlyifdirection}
    If $\mathfrak{g}^*$ is desingularized by $(\A,\rho,\pi_\A)$, then $\mathfrak{g}$ contains an abelian ideal $\mathfrak{a}$ of dimension $n-r$.
\end{theorem}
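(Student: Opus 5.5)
The proof assembles results already established. Suppose $(\A,\rho,\pi_\A)$ desingularizes $(\mathfrak{g}^*,\pi_{\mathfrak{g}^*})$. The origin $0\in\mathfrak{g}^*$ is always an isotropy point of the KKS structure, since the bivector has coefficients linear in the coordinates $x_k$. Global desingularizability implies local desingularizability at $0$, with $U=\mathfrak{g}^*$. This gives the linear map $\Phi_0=(\pi_\A^\sharp\circ\rho^*)_0:\mathfrak{g}_0\to\ker(\rho_0)$, where $\mathfrak{g}_0=(T_0^*\mathfrak{g}^*,[\cdot,\cdot]_\pi)$.

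First, I invoke Lemma \ref{AbelianIdeal} at the isotropy point $0$. It shows that $\ker(\Phi_0)$ is an abelian ideal of $\mathfrak{g}_0$. Its proof uses the almost-injectivity of $\rho$ to see that $\Phi$ is bracket preserving on sections, and a Hadamard-lemma argument to show the bracket vanishes on $\ker(\rho_0^*)$; both hold here with no modification.

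Second, I transport this ideal to $\mathfrak{g}$ using the Lie algebra isomorphism $\varphi:\mathfrak{g}\to T_0^*\mathfrak{g}^*$, $x\mapsto d_0\hat{x}$, of Proposition \ref{IsoT^*gtog}. Set $\mathfrak{a}:=\varphi^{-1}(\ker(\Phi_0))$. A Lie algebra isomorphism maps ideals to ideals and abelian subalgebras to abelian subalgebras, so $\mathfrak{a}$ is an abelian ideal of $\mathfrak{g}$, and $\dim\mathfrak{a}=\dim\ker(\Phi_0)$.

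Third, I compute the dimension with Proposition \ref{kerphidim}, which gives $\dim\ker(\Phi_0)=n-r$. Its proof squeezes the quantity between the two bounds of Proposition \ref{DoubleBound}. The only nontrivial input is $\mathrm{rank}(\A)=2r$, which comes from Proposition \ref{ArankBound} because the symplectic leaves of $\pi_{\mathfrak{g}^*}$ are the coadjoint orbits. Therefore $\dim\mathfrak{a}=n-r$, which completes the argument.

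No step poses a real obstacle, since the substantive work was done in Lemma \ref{AbelianIdeal} and Proposition \ref{kerphidim}. The one point to check is that the ideal produced in $T_0^*\mathfrak{g}^*$ really corresponds to an ideal of $\mathfrak{g}$ with its original bracket, and not with the opposite bracket. Proposition \ref{IsoT^*gtog} settles this, since $\varphi$ is an honest isomorphism. Even with the opposite bracket, ideals and abelianness would be unaffected.
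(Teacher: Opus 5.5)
Your proposal is correct and follows the paper's proof: apply Lemma~\ref{AbelianIdeal} at the isotropy point $0$, get $\dim\ker(\Phi_0)=n-r$ from Proposition~\ref{kerphidim}, and carry $\ker(\Phi_0)$ over to $\mathfrak{g}$ through the Lie algebra isomorphism $x\mapsto d_0\hat{x}$. You simply spell out the transport step and the remark that the bracket sign convention does not affect ideals or abelianness, both of which the paper leaves implicit.
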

\begin{proof}
    Combining Lemma \ref{AbelianIdeal} and Proposition \ref{kerphidim}, we have that $\ker(\Phi_0)$ is an abelian ideal of $T_0^*\mathfrak{g}^*$ of dimension $n-r$. Identifying $T_0^*\mathfrak{g}^*$ with $\mathfrak{g}$, we obtain an abelian ideal $\mathfrak{a}$ of dimension $n-r$.
\end{proof}

\subsection{Sufficient Condition}
Throughout this section, suppose that $\mathfrak{a}$ is an abelian ideal of $\mathfrak{g}$ of dimension $n-r$, where again $2r$ is the maximal coadjoint orbit dimension, and $n$ is the dimension of $\mathfrak{g}$. We build a desingularizing algebroid directly. Let $V$ be a complement to $\mathfrak{a}$ in $\mathfrak{g}$, i.e. $\mathfrak{g}=V \oplus \mathfrak{a}$ as vector spaces. Choose bases $\{v_1,...,v_{r}\}$ and $\{a_1, ..., a_{n-r}\}$ of $V$ and $\mathfrak{a}$, respectively, and write
\begin{equation*}
    [v_i,v_j]= \beta_{ij}^kv_k + \gamma_{ij}^sa_s, \qquad [v_i,a_s]= \Lambda_{is}^t a_t.
\end{equation*}
where the second summand only has non-zero terms in the $a_i$ as $\mathfrak{a}$ is an ideal.
\begin{prop}
    Let $\{x_i\}_{i=1}^r$ be dual coordinates to the $v_i$, and $\{z_i\}_{i=1}^{n-r}$ be the dual coordinates to the $a_i$, that is $x_i(v_j)=z_i(a_j)=\delta_{ij}$ and $x_i(a_j)=z_i(v_j)=0$ for all $i,j$. In these coordinates, the KKS Poisson bivector takes the form
    \begin{equation*}
        \pi = \sum_{i,s,t}\Lambda_{is}^t z_t \p_{x_i}\wedge \p_{z_s}+ \sum_{i<j} \widehat{[v_i,v_j]}\p_{x_i}\wedge \p_{x_j},
    \end{equation*}
    where $\widehat{[v_i,v_j]}\in C^\infty(\mathfrak{g}^*)$ is defined by $\widehat{[v_i,v_j]}(\xi) = \langle \xi, [v_i,v_j] \rangle$.
\end{prop}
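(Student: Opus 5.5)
This is a direct coordinate computation from the general formula for the KKS bivector. For linear coordinates $y_\mu$ on $\mathfrak{g}^*$ dual to a basis $e_\mu$ of $\mathfrak{g}$, the bracket of coordinate functions is $\{y_\mu,y_\nu\}(\xi)=\langle \xi,[e_\mu,e_\nu]\rangle = \widehat{[e_\mu,e_\nu]}(\xi)$. This holds because $d_\xi y_\mu$ is identified with $e_\mu$ under $(\mathfrak{g}^*)^*\cong\mathfrak{g}$, as in the proof of Proposition \ref{IsoT^*gtog}. Consequently
\begin{equation*}
\pi=\sum_{\mu<\nu}\{y_\mu,y_\nu\}\,\p_{y_\mu}\wedge\p_{y_\nu}.
\end{equation*}
I will take the ordered basis $(v_1,\dots,v_r,a_1,\dots,a_{n-r})$, whose dual coordinates are $(x_1,\dots,x_r,z_1,\dots,z_{n-r})$ by hypothesis. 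I then sort the pairs $\mu<\nu$ into three types: $(x_i,x_j)$ with $i<j$, mixed pairs $(x_i,z_s)$, and $(z_s,z_t)$ with $s<t$.

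The three types are handled as follows.

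\emph{Type $(z,z)$.} Here $\{z_s,z_t\}=\widehat{[a_s,a_t]}=0$, since $\mathfrak{a}$ is abelian, so these terms vanish.

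\emph{Type $(x,z)$.} Here $\{x_i,z_s\}=\widehat{[v_i,a_s]}=\sum_t\Lambda_{is}^t\hat a_t$. Evaluating the function $\hat a_t$ at $\xi$ gives $\langle\xi,a_t\rangle$, which is exactly the coordinate $z_t(\xi)$. So $\{x_i,z_s\}=\sum_t\Lambda^t_{is}z_t$, which produces the first sum. Because $\mathfrak{a}$ is an ideal, $[v_i,a_s]$ has no $v$-component, so no $x$-coordinates appear.

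\emph{Type $(x,x)$.} These terms give $\sum_{i<j}\widehat{[v_i,v_j]}\,\p_{x_i}\wedge\p_{x_j}$ directly. If desired, this expands as $\sum_k\beta_{ij}^kx_k+\sum_s\gamma_{ij}^sz_s$.

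There is no real obstacle here; the only care needed is bookkeeping. In particular, the mixed sum runs over all $i,s$ rather than over ordered pairs, and the identification $\hat a_t=z_t$ relies on the $x_i,z_s$ being the dual basis coordinates.
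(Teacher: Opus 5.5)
Your proposal is correct and follows the same route as the paper's proof. You expand $\pi$ in the coordinate basis with coefficients $\{x_i,x_j\}$, $\{x_i,z_s\}$ and $\{z_s,z_t\}$, kill the last type because $\mathfrak{a}$ is abelian, and compute the mixed coefficient as $\langle\xi,[v_i,a_s]\rangle=\sum_t\Lambda_{is}^t z_t(\xi)$ using that $\mathfrak{a}$ is an ideal.
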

\begin{proof}
    With respect to the basis chosen above, we can write the bivector as
    \begin{equation*}
        \pi = \sum_{i<j}\{x_i,x_j\} \p_{x_i}\wedge \p_{x_j}+\sum_{i,s}\{x_i,z_s\}\p_{x_i}\wedge \p_{z_s} + \sum_{s<t}\{z_i,z_s\}\p_{z_s}\wedge \p_{z_t}.
    \end{equation*}
    Since $\mathfrak{a}$ is abelian, we have that $\{z_s,z_t\}=0$ for all $s,t$, so the last summand vanishes. The first's coefficient already matches as $\{x_i,x_j\}=\widehat{[v_i,v_j]}$ by definition of the bracket. The middle summand we can see has the claimed coefficient as at any point $\xi \in \mathfrak{g}^*$ we have
    \begin{equation*}
        \{x_i,z_s\}(\xi) = \langle \xi, [v_i,a_s]\rangle = \langle \xi, \Lambda_{is}^ta_t\rangle = (\Lambda_{is}^t z_t)(\xi).
    \end{equation*}
\end{proof}
Now, define the trivial vector bundle $\A=\R^{2r}\times \mathfrak{g}^*$, with global frame $\{X_1, \dots ,X_r,Y_1, \dots ,Y_r\}$. Define a map $\rho:\Gamma(\A) \ra \Gamma(T\mathfrak{g}^*)$ via
\begin{align*}
\rho(X_i) = \p_{x_i}, \quad \rho(Y_i) = \sum_{s,t}\Lambda_{is}^tz_t\p_{z_s}=: W_i,
\end{align*}
and extend $C^\infty(\mathfrak{g}^*)$-linearly\footnote{By the Serre-Swan theorem, such a morphism can be realized as the induced map on sections of an honest bundle map $\rho:\A \ra T\mathfrak{g}^*$}. At this point, $\A$ is an anchored vector bundle. We claim that it admits a bracket on $\Gamma(\A)$ making it into a Lie algebroid over $\mathfrak{g}$. It suffices to show that $\rho$ is injective on sections and has involutive image, as if this is the case, then the Lie bracket on $\Gamma(T\mathfrak{g}^*)|_{\im(\rho)}$ may be pulled back to a bracket on $\Gamma(\A)$. First, we show injectivity.
\begin{prop}
    $\rho:\Gamma(\A) \ra \Gamma(T\mathfrak{g}^*)$ is injective as a $C^\infty(\mathfrak{g}^*)$-module morphism.
\end{prop}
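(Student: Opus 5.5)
The plan is to split a section into its $X$- and $Y$-parts, kill the $X$-coefficients directly, and kill the $Y$-coefficients by showing that the vector fields $W_1,\dots,W_r$ are pointwise linearly independent on an open dense subset of $\mathfrak{g}^*$. The input for that independence is the hypothesis that the maximal coadjoint orbit dimension is $2r$.

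First, let $\sigma=\sum_i f_iX_i+\sum_i g_iY_i\in\Gamma(\A)$ satisfy $\rho(\sigma)=0$. Then
\begin{equation*}
\sum_i f_i\,\p_{x_i}+\sum_i g_iW_i=0 .
\end{equation*}
Each $W_i$ only involves the $\p_{z_s}$ directions. Comparing $\p_{x_i}$-components therefore gives $f_i\equiv 0$ for every $i$, and we are left with $\sum_i g_iW_i=0$.

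Next I will show that $W_1(\xi),\dots,W_r(\xi)$ are linearly independent for $\xi$ in an open dense set $U\subset\mathfrak{g}^*$. Write $L(\xi)$ for the $r\times(n-r)$ matrix with entries $L(\xi)_{is}=\sum_t\Lambda_{is}^tz_t(\xi)$; its rows are the coefficient vectors of the $W_i(\xi)$. By the preceding proposition, in the ordered coordinates $(x,z)$ the matrix of $\pi_\xi$ has block form
\begin{equation*}
\begin{pmatrix} B(\xi) & L(\xi)\\ -L(\xi)^T & 0\end{pmatrix},
\end{equation*}
where $B(\xi)_{ij}=\widehat{[v_i,v_j]}(\xi)$. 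Counting rows, the first $r$ rows contribute at most $r$ to the rank and the remaining rows contribute at most $\mathrm{rank}\,L(\xi)$. Hence
\begin{equation*}
\mathrm{rank}(\pi_\xi^\sharp)\le r+\mathrm{rank}\,L(\xi).
\end{equation*}

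The main point is now to take $U=\{\xi : \mathrm{rank}(\pi^\sharp_\xi)=2r\}$, the union of maximal-dimensional coadjoint orbits, and check that it is open and dense.
\begin{itemize}
\item It is nonempty by the definition of $r$.
\item It is open because rank is lower semicontinuous and $2r$ is the maximal rank.
\item It is dense because it is the nonvanishing locus of the $2r\times 2r$ minors of $\pi$. These are polynomials in the linear coordinates, and at least one of them is not identically zero. The zero set of a nonzero polynomial on $\R^n$ has empty interior, so $U$ is dense.
\end{itemize}
On $U$ the rank inequality gives $\mathrm{rank}\,L(\xi)\ge r$. Since $L(\xi)$ has only $r$ rows, those rows, and hence $W_1(\xi),\dots,W_r(\xi)$, are linearly independent.

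Finally, on $U$ the relation $\sum_ig_i(\xi)W_i(\xi)=0$ forces $g_i(\xi)=0$, so each $g_i$ is a continuous function vanishing on a dense set and hence vanishes identically. Thus $\sigma=0$ and $\rho$ is injective on sections. As a byproduct, $\rho_\xi$ is injective exactly on $U$, which is the almost-injectivity needed later. I expect the density of $U$ to be the only step needing care; everything else is bookkeeping with the block form of $\pi$.
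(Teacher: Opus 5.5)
Your proof is correct and follows the same route as the paper. Both arguments eliminate the $f_i$ by comparing $\partial_{x_i}$-components, show the $W_i|_\xi$ are linearly independent wherever $\pi_\xi$ has rank $2r$ via the count $2r\le r+\dim\mathrm{span}\{W_i|_\xi\}$, and conclude by density and continuity. Your justifications are in fact more careful than the paper's at three points:
\begin{enumerate}
\item You read the rank bound directly off the block form of $\pi$. The paper instead invokes Proposition~\ref{ArankBound}, which is stated for a desingularizing algebroid that has not yet been constructed at this point.
\item You prove density of the maximal-rank locus via polynomiality of the $2r\times 2r$ minors, whereas the paper only asserts it.
\item You correctly keep the relation $\sum_i g_iW_i=0$, where the paper writes it termwise as ``$g_iW_i=0$''.
\end{enumerate}
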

\begin{proof}
     Suppose that $f_i,g_i$ are such that
    \begin{equation*}
        \rho\left(\sum_i f_iX_i + g_i Y_i \right)= \sum_i f_i\p_{x_i}+ g_i W_i=0.
    \end{equation*}
    At any point $\xi \in \mathfrak{g}^*$, we have that $T_\xi\mathfrak{g}^* = \mathrm{span}\{\p_{x_i}|_\xi\} \oplus \mathrm{span}\{\p_{z_s}|_\xi\}$, and since the $W_i$ contain no $\p_{x_i}$-terms, we must have that $\sum_i f_i \p_{x_i}=0$ and $g_iW_i=0$ for all $i$. From this we can immediately see that the $f_i$ must be zero, so it remains to show that the $g_i$ must vanish as well. We claim that if $\xi \in \mathfrak{g}^*$ is such that $\pi_\xi$ has maximal rank, then the set $\{W_i|_{\xi}\}$ is linearly independent. At such a point, using Proposition \ref{ArankBound}, we have that $\im(\pi_\xi^\sharp)= \im(\rho_\xi)$. From the definition of $\rho$ we have that
    \begin{equation*}
        \im(\rho_\xi) = \{ \p_{x_1}|_\xi, \dots, \p_{x_r}|_\xi, W_1|_\xi, \dots ,W_r|_\xi \}.
    \end{equation*}
    Since the $W_i$ contain no $\p_{x_i}$ terms, we must have that 
    \begin{equation*}
        \mathrm{dim}(\im(\rho_\xi)) = r + \dim\mathrm{span}\{W_1|_\xi, \dots W_r|_\xi \}.
    \end{equation*}
    Since $\xi$ is such that $\pi_\xi$ has maximal rank, i.e. rank $2r$, we then have that
    \begin{equation*}
        2r=r + \dim\mathrm{span}\{W_1|_\xi, \dots W_r|_\xi \},
    \end{equation*}
    thus the $W_i|_\xi$ are linearly independent, as they are a set of $r$ many vectors spanning an $r$-dimensional subspace. Since the set of points $\xi$ for which $\pi_\xi$ has maximal rank is an open dense set, we must have then that the $g_i$ all vanish on this set, however the $g_i$ are smooth, forcing them to vanish identically.
\end{proof}
\begin{prop}
    $\im(\rho)$ is involutive.
\end{prop}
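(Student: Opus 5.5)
The plan is to check involutivity on generators and then extend by the Leibniz rule. Since $\im(\rho)$ is by definition the $C^\infty(\mathfrak{g}^*)$-submodule of $\Gamma(T\mathfrak{g}^*)$ spanned by $\p_{x_1},\dots,\p_{x_r},W_1,\dots,W_r$, the identity
\begin{equation*}
    [fU,gU'] = fg[U,U'] + fU(g)\,U' - gU'(f)\,U
\end{equation*}
for vector fields $U,U'$ and functions $f,g$ shows the following. Once the brackets of any two generators lie in the submodule, the bracket of arbitrary elements $\sum_i f_i\p_{x_i}+g_iW_i$ does too. So it suffices to compute three kinds of brackets between generators.

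First, $[\p_{x_i},\p_{x_j}]=0$, since these are coordinate vector fields. Second, $[\p_{x_i},W_j]=0$. This holds because the coefficients $\sum_t\Lambda_{js}^tz_t$ of $W_j$ depend only on the $z$-coordinates, and $W_j$ has no $\p_{x}$-components, so differentiating in $x_i$ kills everything.

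Third, and this is the main step, I will compute $[W_i,W_j]$. Each $W_i$ is a linear vector field on the $z$-coordinates, determined by the matrix $(\Lambda_i)_{st}=\Lambda_{is}^t$. This matrix is essentially the matrix of $\mathrm{ad}_{v_i}|_{\mathfrak{a}}$, which is well defined since $\mathfrak{a}$ is an ideal. For linear vector fields, the bracket is again linear, and its matrix is the commutator of the matrices up to a sign and transpose convention. Concretely, I expect
\begin{equation*}
    [W_i,W_j]=\pm\sum_{s,t}\big([\Lambda_i,\Lambda_j]\big)_{st}\,z_t\,\p_{z_s}
\end{equation*}
for an appropriate ordering.

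The Jacobi identity gives $\mathrm{ad}_{[v_i,v_j]}=[\mathrm{ad}_{v_i},\mathrm{ad}_{v_j}]$. Restricting to $\mathfrak{a}$ and expanding $[v_i,v_j]=\beta_{ij}^kv_k+\gamma_{ij}^sa_s$, the terms $\mathrm{ad}_{a_s}|_{\mathfrak{a}}$ vanish because $\mathfrak{a}$ is abelian. Hence $[\mathrm{ad}_{v_i},\mathrm{ad}_{v_j}]|_{\mathfrak{a}}=\sum_k\beta_{ij}^k\,\mathrm{ad}_{v_k}|_{\mathfrak{a}}$. Translating back, this yields
\begin{equation*}
    [W_i,W_j]=\pm\sum_k\beta_{ij}^kW_k,
\end{equation*}
a constant-coefficient combination of the $W_k$, so it lies in $\im(\rho)$.

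The only delicate point is bookkeeping the index and sign conventions in this last step: the transpose relating $\Lambda_{is}^t$ to the matrix of $\mathrm{ad}_{v_i}$, and the sign in the bracket of linear vector fields. I would verify these by evaluating $[W_i,W_j]$ on the coordinate function $z_s$ directly. However, neither convention affects the conclusion, since either way the bracket is a real linear combination of the $W_k$. Combining the three cases with the Leibniz identity above proves that $\im(\rho)$ is involutive.
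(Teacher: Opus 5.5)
Your proposal is correct and follows essentially the same route as the paper. You check the same three generator brackets and handle $[W_i,W_j]$ the same way, via the Jacobi identity $[\mathrm{ad}_{v_i},\mathrm{ad}_{v_j}]|_{\mathfrak{a}}=\sum_k\beta_{ij}^k\,\mathrm{ad}_{v_k}|_{\mathfrak{a}}$ and the abelianness of $\mathfrak{a}$. Your Leibniz-rule reduction to generators is a step the paper leaves implicit, and the bookkeeping you defer resolves with a plus sign, $[W_i,W_j]=\sum_k\beta_{ij}^kW_k$, exactly as in the paper.
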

\begin{proof}
    There are three sets of brackets to check; ones involving the images of the $X_i$, ones involving images of the $Y_i$, and ones involving both. First, note that $[\p_{x_i},\p_{x_j}]=0$ as they're coordinate directions. Next we have that
    \begin{equation*}
        [\p_{x_i},W_j] =\sum_{s,t} \Lambda_{js}^t [\p_{x_i}, z_t\p_{z_s}] = \sum_{s,t}\Lambda_{js}^t(z_t[\p_{x_i}, \p_{z_s}]+ \p_{x_i}(z_t)\p_{z_s}) = 0.
    \end{equation*}
    Finally, we must check that $[W_i,W_j] \in \im(\rho)$. Define $\phi^k_s:=\sum_t \Lambda_{ks}^tz_t$, so that $W_k = \sum_s\phi^k_s \p_{z_s}$. Expanding the bracket, we have that
    \begin{equation*}
        [W_i,W_j] = \sum_u\left( \sum_s \left(  \phi_s^i \frac{\p \phi^j_u}{\p z_s}-\phi^j_s \frac{\p \phi^i_u}{\p z_s}\right) \right)\p_{z_u}.
    \end{equation*}
    Now, note that
    \begin{equation*}
        \frac{\p \phi^j_u}{\p z_s} = \Lambda_{ju}^s,\quad \frac{\p \phi^i_u}{\p z_s} = \Lambda_{iu}^s,
    \end{equation*}
    and so substituting we have that the coefficient of $\p_{z_u}$ is given by
    \begin{equation*}
        \sum_s (\phi_s^i \Lambda_{ju}^s - \phi^j_s \Lambda_{iu}^s) = \sum_{s,t}z_t(\Lambda_{is}^t\Lambda_{ju}^s - \Lambda_{js}^t\Lambda_{iu}^s).
    \end{equation*}
    Now, if we define $\Lambda_k$ to be the $n-r\times n-r$ matrix of $\mathrm{ad}_{v_k}|_\mathfrak{a}$ in the basis $\{a_1,\dots, a_{n-r}\}$, we have that $(\Lambda_k)_{ts}=\Lambda_{ks}^t$ by definition of the $\Lambda_{is}^t$. With this, the above simplifies to show that
    \begin{equation*}
       [W_i,W_j]= \sum_{u,t} ([\Lambda_i,\Lambda_j]_{tu})z_t\p_{z_u}.
    \end{equation*}
    By the Jacobi identity, we have that $[\Lambda_i,\Lambda_j]=\Lambda_{[v_i,v_j]}$, which using linearity of $\mathrm{ad}_v$ we can express as
    \begin{equation*}
        \Lambda_{[v_i,v_j]}= \sum_k\beta_{ij}^k \Lambda_{v_k}+ \sum_k \gamma_{ij}^k\Lambda_{a_k}.
    \end{equation*}
    Since $\mathfrak{a}$ is an ideal, we may restrict this operator to $\mathfrak{a}$, and as $\mathfrak{a}$ is abelian, we have that $\Lambda_{a_k}=\mathrm{ad}_{a_k}|_\mathfrak{a}=0$, so the second summand vanishes. Returning to the formula for $[W_i,W_j]$, we have that
    \begin{align*}
        [W_i,W_j] &= \sum_{t,u}\left( \sum_k \beta_{ij}^k (\Lambda_{k})_{tu}\right)z_t\p_{z_u}  =\sum_k \beta_{ij}^k \sum_{t,u}\Lambda_{ku}^t z_t\p_{z_u} = \sum_k\beta_{ij}^k\sum_u \phi^k_u\p_{z_u} = \sum_k \beta_{ij}^k W_k,
    \end{align*}
    which indeed lies in $\im(\rho)$.
\end{proof}
Together these two propositions give the following:
\begin{lemma}\label{algebroidConstruction}
    $\A$ is an almost-injective Lie algebroid over $\mathfrak{g}^*$ with anchor defined above, and bracket on sections given by
    \begin{equation*}
        [s,t]_\A := \rho^{-1}([\rho(s),\rho(t)]_{T\mathfrak{g}^*}).
    \end{equation*}
\end{lemma}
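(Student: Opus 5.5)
We deduce Lemma \ref{algebroidConstruction} from the two preceding propositions: injectivity of $\rho$ on sections and involutivity of $\im(\rho)$. The idea is that an injective, involutive $C^\infty(\mathfrak{g}^*)$-submodule of vector fields carries the restricted Lie bracket of vector fields, and $\rho$ transports it to $\Gamma(\A)$.

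The first step is that the bracket is well defined. For $s,t \in \Gamma(\A)$, involutivity gives $[\rho(s),\rho(t)] \in \im(\rho)$. Injectivity then gives a unique preimage, so $[s,t]_\A$ is a well-defined element of $\Gamma(\A)$. Concretely, on the frame we get
\begin{equation*}
[X_i,X_j]_\A=0,\qquad [X_i,Y_j]_\A=0,\qquad [Y_i,Y_j]_\A=\sum_k \beta_{ij}^k Y_k .
\end{equation*}
Here the last equality uses $\rho(Y_k)=W_k$ together with the computation $[W_i,W_j]=\sum_k\beta_{ij}^kW_k$ from the preceding proposition.

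The second step is to check the Lie algebroid axioms by transport through the injective map $\rho$.
\begin{itemize}
\item \emph{Bilinearity and skew-symmetry} are immediate from the corresponding properties of the vector field bracket and linearity of $\rho^{-1}$ on $\im(\rho)$.
\item \emph{Jacobi.} We have $\rho([[s,t]_\A,u]_\A)=[[\rho s,\rho t],\rho u]$. So $\rho$ applied to the Jacobiator of $s,t,u$ equals the Jacobiator of $\rho s,\rho t,\rho u$ in $\Gamma(T\mathfrak{g}^*)$, which is zero. Injectivity then forces the $\A$-Jacobiator to vanish.
\item \emph{Leibniz rule.} We have
\begin{equation*}
\rho([s,ft]_\A)=[\rho s,f\rho t]=f[\rho s,\rho t]+(\rho(s)f)\rho(t)=\rho\big(f[s,t]_\A+(\rho(s)f)t\big),
\end{equation*}
using $C^\infty$-linearity of $\rho$. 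Injectivity again gives $[s,ft]_\A=f[s,t]_\A+\rho(s)(f)\,t$.
\item \emph{Anchor is a morphism of brackets.} The identity $\rho([s,t]_\A)=[\rho s,\rho t]$ holds by construction.
\end{itemize}

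The third step is almost-injectivity. By definition this is equivalent to injectivity of $\rho$ on sections, which was the first proposition. If one prefers the pointwise formulation, the proof of that proposition already shows $\rho_\xi$ is injective on the open dense set where $\pi_\xi$ has rank $2r$, since the $\p_{x_i}|_\xi$ and $W_i|_\xi$ are linearly independent there.

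There is one point requiring care, which I expect to be the only real subtlety. The bracket $\rho^{-1}$ is defined on global sections, and we need it to be local so that $\A$ is a genuine Lie algebroid. Locality follows from the Leibniz rule by the standard bump-function argument. Alternatively, it is visible directly here: $\A$ is trivial with a global frame whose brackets are the constant-coefficient formulas above, so the bracket is determined by the Leibniz rule and these frame relations.
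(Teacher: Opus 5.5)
Your proposal is correct and follows the paper's route: the paper also deduces the lemma from injectivity of $\rho$ on sections and involutivity of $\im(\rho)$, by pulling back the bracket of vector fields along $\rho$. You simply write out the axiom checks (Jacobi, Leibniz, compatibility with the anchor), the explicit frame brackets, and the locality remark, all of which the paper leaves implicit.
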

We now claim that $\A$ is a desingularizing algebroid for $\mathfrak{g}^*$ equipped with the KKS bivector.
\begin{theorem}\label{ifdirection}
    $\A$ with $\pi_\A\in \Gamma(\bigwedge^2\A)$ defined via
    \begin{equation*}
        \pi_\A := \sum_{1 \leq i \leq r}X_i \wedge Y_i + \sum_{i<j}\widehat{[v_i,v_j]}X_i\wedge X_j
    \end{equation*}
    is a desingularizing algebroid for $(\mathfrak{g}^*,\pi)$.
\end{theorem}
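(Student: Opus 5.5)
We need to check three things: that $\pi_\A$ is nondegenerate, that $\wedge^2\rho(\pi_\A)=\pi$, and that $\pi_\A$ is Poisson. The anchor is already almost-injective by Lemma \ref{algebroidConstruction}.

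\textbf{Nondegeneracy.} Order the global frame as $(X_1,\dots,X_r,Y_1,\dots,Y_r)$. In this frame the coefficient matrix of $\pi_\A$ has block form
\begin{equation*}
\begin{pmatrix} C & I_r \\ -I_r & 0\end{pmatrix},
\end{equation*}
where $C$ is the skew matrix with entries $C_{ij}=\widehat{[v_i,v_j]}$. Its determinant is $\det(I_r)^2=1$ at every point of $\mathfrak{g}^*$. Hence $\pi_\A$ is invertible everywhere, not just generically.

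\textbf{Pushforward.} Using $C^\infty(\mathfrak{g}^*)$-linearity of $\wedge^2\rho$, we get
\begin{equation*}
\wedge^2\rho(\pi_\A)=\sum_i \p_{x_i}\wedge W_i+\sum_{i<j}\widehat{[v_i,v_j]}\,\p_{x_i}\wedge\p_{x_j}.
\end{equation*}
Substituting $W_i=\sum_{s,t}\Lambda_{is}^t z_t\p_{z_s}$, the first sum becomes $\sum_{i,s,t}\Lambda_{is}^t z_t\,\p_{x_i}\wedge\p_{z_s}$. This is exactly the coordinate expression for the KKS bivector $\pi$ in the preceding proposition, so $\wedge^2\rho(\pi_\A)=\pi$.

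\textbf{Poisson condition.} By Remark \ref{AutomaticPoisson}, since $\rho$ is injective on sections, $\wedge^2\rho(\pi_\A)=\pi$ and $[\pi,\pi]=0$ force $[\pi_\A,\pi_\A]=0$. One subtlety is that the remark needs $\rho$ to intertwine the Schouten brackets. This holds because $\rho$ is a Lie algebroid morphism by the construction in Lemma \ref{algebroidConstruction}: the bracket was defined as $\rho^{-1}$ of the vector field bracket, so $\rho$ preserves it by definition. The Schouten extension is then preserved as well.

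Combining the three steps, $(\A,\rho,\pi_\A)$ is a symplectic Lie algebroid with almost-injective anchor inducing $\pi$, so it desingularizes $(\mathfrak{g}^*,\pi)$.

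The only step requiring any care is the matching of index conventions in the pushforward. One must check that the coefficient of $\p_{x_i}\wedge\p_{z_s}$ is $\Lambda_{is}^t z_t$ with the same index placement as in the definition of $W_i$; the definitions make this consistent. No genuine obstacle is expected, since the substantive work (involutivity and injectivity) was done in the preceding propositions.
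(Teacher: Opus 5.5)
Your proposal is correct and follows the paper's proof step for step. Nondegeneracy comes from the block matrix with identity off-diagonal blocks, which has determinant $1$ everywhere. The identity $\wedge^2\rho(\pi_{\mathcal{A}})=\pi$ comes from substituting $W_i$ into the coordinate formula for the KKS bivector, and $[\pi_{\mathcal{A}},\pi_{\mathcal{A}}]=0$ follows from Remark \ref{AutomaticPoisson}. Your added note that $\rho$ preserves brackets by the very definition of $[\cdot,\cdot]_{\mathcal{A}}$ in Lemma \ref{algebroidConstruction} is exactly what the paper uses implicitly at that step.
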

\begin{proof}
    By Lemma \ref{algebroidConstruction}, we have that $(\A,\rho,\pi_\A)$ is an almost injective Lie algebroid over $\mathfrak{g}^*$. It remains to show that $\pi_\A$ is invertible, that $\wedge^2\rho(\pi_\A)=\pi$, and that $\pi_\A$ is Poisson. Fix a point $\xi \in \mathfrak{g}^*$, and define an $r\times r$ matrix $A^\xi$ by $(A^\xi)_{ij}= \widehat{[v_i,v_j]}(\xi)$. In the global frame $\{X_1, \dots, X_r, Y_1,\dots, Y_r\}$, we have that $(\pi_\A^\sharp)_\xi$ has block matrix form
    \begin{equation*}
        (\pi_\A^\sharp)_\xi = \begin{pmatrix}
            A^\xi & I \\
            -I & 0
        \end{pmatrix}
    \end{equation*}
    where $I$ denotes the $r\times r$ identity matrix. From this we can see that $\det((\pi_\A^\sharp)_\xi) =1$ for all $\xi \in \mathfrak{g}^*$, hence $\pi_\A$ is nondegenerate. Directly computing, we have that
    \begin{align*}
        \wedge^2\rho(\pi_\A) &= \sum_{1 \leq i \leq r}\rho(X_i) \wedge \rho(Y_i) + \sum_{i<j}\widehat{[v_i,v_j]}\rho(X_i)\wedge \rho(X_j) \\
        &=\sum_{1 \leq i \leq r}\p_{x_i}\wedge W_i + \sum_{i<j}\widehat{[v_i,v_j]} \p_{x_i}\wedge \p_{x_j} \\
        &=\sum_{i,s,t}\Lambda_{is}^t z_t \p_{x_i}\wedge \p_{z_s} +\sum_{i<j}\widehat{[v_i,v_j]} \p_{x_i}\wedge \p_{x_j} \\
        &=\pi.
    \end{align*}
    By remark \ref{AutomaticPoisson}, we then have that $\pi_\A$ is Poisson, concluding the proof.
\end{proof}
We can now prove the main theorem:
\begin{proof}[Proof of Theorem \ref{MainThm}]
    If $\mathfrak{g}$ is desingularizable, then Theorem \ref{onlyifdirection} provides such an abelian ideal $\mathfrak{a}$ of the correct dimension. If $\mathfrak{g}$ contains such an ideal, then Theorem \ref{ifdirection} directly constructs a desingularizing algebroid.
\end{proof}
\subsection{Corollaries}
\begin{corollary} \label{NoReductiveOnes}
    If $\mathfrak{g}$ is a real finite-dimensional, non-abelian reductive Lie algebra, then $\mathfrak{g}^*$ is not desingularizable.
\end{corollary}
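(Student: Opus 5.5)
By Theorem \ref{MainThm}, it suffices to show that a non-abelian reductive $\mathfrak{g}$ of dimension $n$ has no abelian ideal of dimension $n-r$. Here $2r$ is the maximal coadjoint orbit dimension.

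First I will fix the structure. Write $\mathfrak{g} = \mathfrak{z} \oplus \mathfrak{s}$ with $\mathfrak{z}=Z(\mathfrak{g})$ and $\mathfrak{s}=[\mathfrak{g},\mathfrak{g}]$ semisimple; since $\mathfrak{g}$ is non-abelian, $\mathfrak{s}\neq 0$. Any ideal $\mathfrak{a}$ of $\mathfrak{g}$ is stable under $\mathrm{ad}_{\mathfrak{s}}$. Decomposing $\mathfrak{g}=\mathfrak{z}\oplus\mathfrak{s}$ as an $\mathfrak{s}$-module, with $\mathfrak{z}$ trivial and $\mathfrak{s}$ having no trivial summand (as $Z(\mathfrak{s})=0$), complete reducibility gives $\mathfrak{a} = (\mathfrak{a}\cap\mathfrak{z}) \oplus (\mathfrak{a}\cap\mathfrak{s})$. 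More elementarily, $[\mathfrak{s},\mathfrak{a}]\subset \mathfrak{a}\cap\mathfrak{s}$, and one checks the projection of $\mathfrak{a}$ to $\mathfrak{s}$ is an ideal of $\mathfrak{s}$. Now $\mathfrak{a}\cap\mathfrak{s}$ is an abelian ideal of the semisimple algebra $\mathfrak{s}$, hence zero. So every abelian ideal lies in $\mathfrak{z}$, and $\dim\mathfrak{a}\le \dim\mathfrak{z}$.

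Next I will bound $r$ from below. Coadjoint orbits of $\mathfrak{g}$ are products of points in $\mathfrak{z}^*$ with coadjoint orbits of $\mathfrak{s}$, so $2r = 2r_{\mathfrak{s}}$, where $2r_{\mathfrak{s}}$ is the maximal coadjoint orbit dimension of $\mathfrak{s}$. Since $\mathfrak{s}\neq 0$ is semisimple it is non-abelian, so some $\mathrm{ad}^*_x\xi\neq 0$ and $r\ge 1$. Then $n-r = \dim\mathfrak{z} + \dim\mathfrak{s} - r$. Suppose an abelian ideal of dimension $n-r$ existed; combined with $\dim\mathfrak{a}\le\dim\mathfrak{z}$ this would force $\dim\mathfrak{s}\le r$.

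The remaining step is to show $r<\dim\mathfrak{s}$. This follows because $2r\le \dim\mathfrak{s}$: the orbit of a point of $\mathfrak{s}^*$ lies in $\mathfrak{s}^*$. If $\dim\mathfrak{s}\le r \le \dim\mathfrak{s}/2$, then $\dim\mathfrak{s}=0$, a contradiction. Hence no abelian ideal of dimension $n-r$ exists, and Theorem \ref{MainThm} gives non-desingularizability.

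The main obstacle is the first step, the decomposition $\mathfrak{a}=(\mathfrak{a}\cap\mathfrak{z})\oplus(\mathfrak{a}\cap\mathfrak{s})$. I would handle it via complete reducibility (Weyl's theorem) applied to $\mathfrak{g}$ as an $\mathrm{ad}(\mathfrak{s})$-module. Alternatively, let $\mathfrak{a}'$ be the image of $\mathfrak{a}$ under projection to $\mathfrak{s}$. It is an abelian ideal of $\mathfrak{s}$, because $[a+z,b+z']=[a,b]$ for $a,b\in\mathfrak{s}$ and $z,z'\in\mathfrak{z}$, and projection commutes with $\mathrm{ad}_{\mathfrak{s}}$. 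Hence $\mathfrak{a}'=0$ and $\mathfrak{a}\subset\mathfrak{z}$ directly. This second route avoids Weyl's theorem entirely, so I would use it.
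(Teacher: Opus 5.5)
Your proof is correct. It shares the paper's skeleton: use Theorem \ref{MainThm} to reduce to ruling out an abelian ideal of dimension $n-r$, then show every abelian ideal lies in $\mathfrak{z}$, so it suffices to prove $n-r>\dim\mathfrak{z}$.

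The genuine difference is how you bound $r$. The paper cites Kostant's theorem to get the exact value $2r=\dim\mathfrak{s}-\mathrm{rank}(\mathfrak{s})$, which gives $n-r=\dim\mathfrak{z}+\tfrac12(\dim\mathfrak{s}+\mathrm{rank}(\mathfrak{s}))\geq\dim\mathfrak{z}+2$. You only use the trivial bound $2r\le\dim\mathfrak{s}$. Its cleanest justification is that $\mathfrak{z}\subset\ker B_\xi$ for every $\xi$, so $\mathrm{rank}(B_\xi)\le n-\dim\mathfrak{z}$. That already gives $n-r\ge\dim\mathfrak{z}+\tfrac12\dim\mathfrak{s}>\dim\mathfrak{z}$.

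What each route buys: yours is fully elementary, and it avoids having to carry Kostant's result, which is stated for complex semisimple algebras, over to the real case. The paper's route gives the exact size of the gap, which is more than the corollary needs.

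Your projection argument for $\mathfrak{a}\subset\mathfrak{z}$ is also more complete than the paper's one-line justification. The paper asserts that an abelian ideal not contained in $\mathfrak{z}$ must meet $\mathfrak{s}$ nontrivially, without saying why; your argument that the projection to $\mathfrak{s}$ is an abelian ideal of $\mathfrak{s}$ settles this directly. Finally, your remark that $r\ge 1$ is never used and can be dropped.
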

\begin{proof}
    By Theorem \ref{MainThm}, it suffices to show that such a Lie algebra can have no abelian ideal of dimension $n-r$. Write $\mathfrak{g}=\mathfrak{s}\oplus \mathfrak{z}$ where $\mathfrak{s}$ is semisimple, and $\mathfrak{z}$ is abelian, and suppose $\mathfrak{i}$ is an ideal of $\mathfrak{g}$. We must have that $\mathfrak{i}=\mathfrak{i}\cap\mathfrak{z}$, as otherwise $\mathfrak{i}\cap \mathfrak{s}$ would be a nontrivial abelian ideal of a semisimple Lie algebra, which cannot exist. From this, we see that $\dim \mathfrak{i} \leq \dim \mathfrak{z}$. By \cite{MR150240}*{Theorem 0.7}, we have that the maximal coadjoint orbit dimension of $\mathfrak{g}$ agrees with that of $\mathfrak{s}$, and is $\dim(\mathfrak{s})-\mathrm{rank}(\mathfrak{s})=2r$. We then have that
    \begin{equation*}
        n-r = (\dim(\mathfrak{s}) + \dim(\mathfrak{z}))-\frac{1}{2}(\dim(\mathfrak{s}) - \mathrm{rank}(\mathfrak{s})) = \dim(\mathfrak{z})+\frac{1}{2}(\dim(\mathfrak{s})+\mathrm{rank}(\mathfrak{s})).
    \end{equation*}
    Since $\mathfrak{s}$ is semisimple, we have that $\dim(\mathfrak{s}) \geq 3$ and $\mathrm{rank}(\mathfrak{s})\geq 1$, thus
    \begin{equation*}
        n-r \geq \dim(\mathfrak{z}) + 2 > \dim \mathfrak{z} \geq \dim \mathfrak{i},
    \end{equation*}
    thus no abelian ideal can reach the necessary dimension.
\end{proof}
\begin{corollary}
    If $\mathfrak{g}$ is a real Lie algebra of dimension $3$ or less, then $\mathfrak{g}^*$ is desingularizable if and only if $\mathfrak{g}$ is not semisimple.
\end{corollary}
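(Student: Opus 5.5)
We apply Theorem \ref{MainThm} directly. The direction ``semisimple implies non-desingularizable'' is already covered: Corollary \ref{NoReductiveOnes} handles it, since a semisimple algebra is non-abelian reductive. Alternatively, the semisimple obstruction at the isotropy point $0$ works together with Proposition \ref{IsoT^*gtog}. For the converse, we will show that every non-semisimple real Lie algebra $\mathfrak{g}$ with $n=\dim\mathfrak{g}\le 3$ has an abelian ideal of dimension $n-r$.

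The first step is to pin down $r$. Since $2r\le n\le 3$, we always have $r\in\{0,1\}$. If $r=0$, all coadjoint orbits are points. This means $\xi([x,y])=0$ for all $\xi$, so $\mathfrak{g}$ is abelian and $\mathfrak{g}$ itself is an abelian ideal of dimension $n=n-r$; Proposition \ref{TrivialIsDesing} also applies. So we may assume $\mathfrak{g}$ is non-abelian, so $r=1$, and we need an abelian ideal of dimension $n-1$. This forces $n\in\{2,3\}$, because $n=1$ gives an abelian algebra.

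For $n=2$, the only non-abelian algebra is $[e_1,e_2]=e_2$. There $\mathfrak{a}=\mathrm{span}\{e_2\}$ is an abelian ideal of dimension $1=n-r$.

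For $n=3$, we need a $2$-dimensional abelian ideal in any non-semisimple, non-abelian $\mathfrak{g}$. I would use the Bianchi classification of real $3$-dimensional Lie algebras. The non-semisimple ones are $\mathfrak{r}_2\oplus\R$, the Heisenberg algebra, and the solvable algebras $\R\ltimes_D\R^2$ with $D\in\mathfrak{gl}_2(\R)$. In the Heisenberg case, $\mathrm{span}\{y,z\}$ with $[x,y]=z$ works. In the semidirect case, $\R^2$ is the required ideal. For $\mathfrak{r}_2\oplus\R$, where $[e_1,e_2]=e_2$ and $e_3$ is central, take $\mathrm{span}\{e_2,e_3\}$.

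To avoid quoting the classification, I would argue structurally. A non-semisimple $3$-dimensional algebra is solvable, since by Levi its semisimple part has dimension $0$ or $3$. Its derived algebra $[\mathfrak{g},\mathfrak{g}]$ is then nilpotent of dimension at most $2$, hence abelian. If $\dim[\mathfrak{g},\mathfrak{g}]=2$, it is the desired ideal. If $\dim[\mathfrak{g},\mathfrak{g}]=1$, any $2$-dimensional subspace containing $[\mathfrak{g},\mathfrak{g}]$ is automatically an ideal. We must choose it abelian: take $[\mathfrak{g},\mathfrak{g}]=\mathrm{span}\{z\}$, and note that the centralizer of $z$ has dimension at least $2$, since $\mathrm{ad}\,z$ has rank at most $1$. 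Any $2$-dimensional subspace of that centralizer containing $z$ is then abelian and contains the derived algebra.

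The main obstacle is this last case analysis, specifically ensuring abelianness when the derived algebra is $1$-dimensional. The centralizer argument should settle it. Theorem \ref{MainThm} then gives desingularizability.
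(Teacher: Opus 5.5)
Your proof is correct and follows the same skeleton as the paper. You apply Theorem \ref{MainThm}, handle the semisimple case via Corollary \ref{NoReductiveOnes}, note that $r=1$ for non-abelian $\mathfrak{g}$, and in dimension $3$ split on $\dim[\mathfrak{g},\mathfrak{g}]\in\{1,2\}$. In the one-dimensional case, your centralizer of $z$ is exactly the paper's $\ker(\varphi)$, where $[x,v]=\varphi(x)v$, so that step is the same.

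The genuine difference is the two-dimensional case. The paper proves $[\mathfrak{g},\mathfrak{g}]$ abelian by hand: it writes out the single Jacobi identity for the triple $(v,a_1,a_2)$ and rules out $(\lambda,\mu)\neq(0,0)$. You instead use structure theory: Levi gives solvability, the derived algebra of a solvable Lie algebra in characteristic zero is nilpotent, and nilpotent algebras of dimension at most $2$ are abelian.

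Your route is shorter, and it supplies something the paper leaves implicit. The paper asserts that non-semisimple implies $[\mathfrak{g},\mathfrak{g}]\neq\mathfrak{g}$. This is false in general: $\mathfrak{sl}_2(\R)\ltimes\R^2$ is perfect but not semisimple. It holds in dimension $3$ precisely by your Levi argument, since there is no semisimple algebra of dimension $1$ or $2$. The paper's computation, by contrast, is self-contained and avoids both Levi and Lie's theorem.

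Your Bianchi-classification paragraph is unnecessary once you have the structural argument. Its list also overlaps, since $\mathfrak{r}_2\oplus\R$ and the Heisenberg algebra are themselves of the form $\R\ltimes_D\R^2$, but this does no harm.
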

\begin{proof}
    \textit{Dimension $1$:} the only Lie algebra up to isomorphism is the abelian one, which is desingularizable by Proposition \ref{TrivialIsDesing}.\newline
    \textit{Dimension $2$:} there are two Lie algebras up to isomorphism; the abelian one which is desingularizable again by Proposition \ref{TrivialIsDesing}, and one given by $\mathrm{span}\{e_1,e_2\}$ with nonzero bracket $[e_1,e_2]=e_1$. Here, $n=2, r=1$ and $\mathfrak{a}:=\mathrm{span}\{e_1\}$ satisfies the criterion in Theorem \ref{MainThm}, hence is desingularizable. \newline
    \textit{Dimension $3$:} We have by Corollary \ref{NoReductiveOnes} that any semisimple Lie algebra is automatically not desingularizable, so suppose that $\mathfrak{g}$ is dimension $3$ and not semisimple. If $\mathfrak{g}$ is abelian, then it is desingularizable, so suppose that $\mathfrak{g}$ is not abelian. Since $\dim \mathfrak{g}=3$ and nonabelian, we have that $r=1$, so by Theorem \ref{MainThm}, we must show that there is an abelian ideal $\mathfrak{a} \subset \mathfrak{g}$ with $\dim \mathfrak{a}=2$. Since $\mathfrak{g}$ is not semisimple, we have that $[\mathfrak{g},\mathfrak{g}] \neq \mathfrak{g}$, thus $\dim([\mathfrak{g},\mathfrak{g}]) \in \{1,2\}$. 
    \begin{enumerate}
        \item Suppose that $\dim\mathfrak[\mathfrak{g},\mathfrak{g}]=1$. Fix a basis $\{v\}$ of $[\mathfrak{g},\mathfrak{g}]$. Note that $[\mathfrak{g},v]\subset [\mathfrak{g},[\mathfrak{g},\mathfrak{g}]] \subset [\mathfrak{g},\mathfrak{g}]$ so there must exist $\varphi \in \mathfrak{g}^*$ such that for all $x \in \mathfrak{g}$ we have that $[x,v]=\varphi(x)v$. By rank-nullity, $\dim(\ker(\varphi) \geq 2$. Choose $w \in \ker(\varphi) \setminus \mathrm{span}\{v\}$, and define $\mathfrak{a}:=\mathrm{span}\{v,w\}$. We claim that $\mathfrak{a}$ is an abelian ideal. To see it's an ideal, note that for any $x \in \mathfrak{g}$, we have that $[x,w] \in [\mathfrak{g},\mathfrak{g}]$, thus there exists $\lambda \in \R$ such that $[x, w]=\lambda v \in \mathfrak{a}$, and likewise, $[x,v]=\varphi(x)v \in \mathfrak{a}$, hence $\mathfrak{a}$ is an ideal. To see it's abelian, note that the only possible nonzero bracket is $[w,v]=\varphi(w)v$, but $w \in \ker(\varphi)$, hence $[w,v]=0$, and so $\mathfrak{a}$ is abelian.
        \item Suppose that $\dim[\mathfrak{g},\mathfrak{g}]=2$. We claim that $\mathfrak{a}:=[\mathfrak{g},\mathfrak{g}]$ is a $2$-dimensional abelian ideal. The only part of this not clear is that it's abelian, which we check now. Choose a basis $\{a_1,a_2\}$ of $\mathfrak{a}$, and extend it to a basis $\{a_1,a_2,v\}$ of $\mathfrak{g}$. Since $\mathfrak{a}$ is ideal, there are $p,q,r,s,\lambda,\mu \in \R$ such that
        \begin{equation*}
            [v,a_1]=pa_1+qa_2, \quad [v,a_2]=ra_1+sa_2, \quad [a_1,a_2]=\lambda a_1+\mu a_2.
        \end{equation*}
        From the Jacobi identity, we must have that
        \begin{equation*}
            r \mu = s\lambda \quad \text{ and }\quad  \lambda q = \mu p.
        \end{equation*}
        Suppose toward a contradiction that $(\lambda,\mu)\neq (0,0)$. If $\lambda \neq 0$, then we have that 
        \begin{align*}
            [v,a_1]&=pa_1+ \frac{\mu p}{\lambda}a_2 = \frac{p}{\lambda}[a_1,a_2], \\
            [v,a_2]&= ra_1+ \frac{r \mu}{\lambda}a_2 = \frac{r}{\lambda}[a_1,a_2],
        \end{align*}
        hence we must have that $[\mathfrak{g},\mathfrak{g}] \subset \mathrm{span}\{[a_1,a_2]\}$, which forces $\dim [\mathfrak{g},\mathfrak{g}] \leq 1$, a contradiction. If $\lambda=0$, so that $\mu \neq 0$, we then have that $r=0$ and $p=0$, thus
        \begin{equation*}
            [v,a_1] = qa_2, \quad [v,a_2]=sa_2, \quad [a_1,a_2]=\mu a_2,
        \end{equation*}
        again giving us that $\dim[\mathfrak{g},\mathfrak{g}]\leq 1$, a contradiction. We must then have that $(\lambda,\mu)=(0,0)$, and so $\mathfrak{a}$ is abelian.
    \end{enumerate}
\end{proof}

In the 2-step nilpotent case, Theorem \ref{MainThm} can be restated in the following manner:
\begin{corollary}
    Suppose that $\mathfrak{g}$ is a real finite-dimensional $2$-step nilpotent Lie algebra with center $Z(\mathfrak{g})$, let $V$ be any complement of $Z(\mathfrak{g})$, i.e. $\mathfrak{g}=V \oplus Z(\mathfrak{g})$ as vector spaces, and let $B:\bigwedge^2V \ra \mathfrak{g}$ be the bracket map, i.e. $B(v\wedge w)=[v,w]$. Then $\mathfrak{g}$ is desingularizable if and only if there is subspace $W \subset V$ that is isotropic with respect to $B$ and satisfies $\dim(W) \geq \dim(V)-r$.
\end{corollary}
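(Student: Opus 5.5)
The plan is to reduce everything to Theorem \ref{MainThm} by setting up a dictionary between abelian ideals $\mathfrak{a}$ of dimension $n-r$ and $B$-isotropic subspaces $W\subset V$ with $\dim W\ge \dim V - r$. Write $Z=Z(\mathfrak{g})$ and $n=\dim V+\dim Z$. Since $\mathfrak{g}$ is $2$-step nilpotent, $[\mathfrak{g},\mathfrak{g}]\subset Z$. Consequently any subspace containing $Z$ is automatically an ideal: for $x\in\mathfrak{g}$ and any $y$ we have $[x,y]\in Z$. Moreover $[V\oplus Z, Z]=0$, so for a subspace $W\subset V$ the subspace $W\oplus Z$ is abelian exactly when $[w_1,w_2]=0$ for all $w_i\in W$, i.e.\ when $W$ is isotropic for $B$.

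For the reverse direction (isotropic $\Rightarrow$ desingularizable), take an isotropic $W$ with $\dim W\ge \dim V-r$. Then $W\oplus Z$ is an abelian ideal of dimension $\dim W+\dim Z\ge n-r$. By Proposition \ref{abelianidealrankbound}, every abelian subalgebra has dimension at most $n-r$, so in fact $\dim(W\oplus Z)=n-r$. Theorem \ref{MainThm} then gives desingularizability. (If one prefers not to use equality, a subspace of exact dimension can also be chosen, but the upper bound makes this unnecessary.)

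For the forward direction, suppose Theorem \ref{MainThm} provides an abelian ideal $\mathfrak{a}$ of dimension $n-r$. The key step, and the one I expect to be the main obstacle, is that $\mathfrak{a}$ need not contain $Z$ nor sit nicely relative to $V$. To handle this, I will first enlarge it: $\mathfrak{a}+Z$ is still abelian, since $Z$ is central and $\mathfrak{a}$ is abelian, and it is an ideal. By maximality from Proposition \ref{abelianidealrankbound}, $\dim(\mathfrak{a}+Z)\le n-r=\dim\mathfrak{a}$, hence $Z\subset\mathfrak{a}$.

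Next I project: let $p:\mathfrak{g}\to V$ be the projection along $Z$ and set $W=p(\mathfrak{a})$. Since $Z\subset\mathfrak{a}$, we have $\mathfrak{a}=W\oplus Z$. Hence $\dim W=n-r-\dim Z=\dim V-r$. For $w_1,w_2\in W$, write $w_i=a_i-z_i$ with $a_i\in\mathfrak{a}$ and $z_i\in Z$; then $[w_1,w_2]=[a_1,a_2]=0$, so $W$ is $B$-isotropic. This completes the equivalence.
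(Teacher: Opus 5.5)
Your proof is correct and essentially matches the paper's. The reverse direction is identical, and in the forward direction your $W=p(\mathfrak{a})$ coincides with the paper's $W=\mathfrak{a}\cap V$.

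The only difference is how you get the dimension. The paper obtains $\dim W\ge \dim V-r$ directly from the count $\dim(\mathfrak{a}\cap V)\ge\dim\mathfrak{a}+\dim V-n$. You instead first use the bound of Proposition \ref{abelianidealrankbound} to force $Z(\mathfrak{g})\subset\mathfrak{a}$. That step is not needed for the inequality, but it gives the sharper conclusion $\mathfrak{a}=W\oplus Z(\mathfrak{g})$.
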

\begin{proof}
   Suppose $W \subset V$ is as above, and define $\mathfrak{a}:= W \oplus Z(\mathfrak{g})$. We claim that $\mathfrak{a}$ is an abelian ideal of dimension $n-r$. To see it's an ideal, note that for any $x \in \mathfrak{g}$ and any $w+z \in \mathfrak{a}$, we have that
   \begin{equation*}
       [x,w+z]=[x,w]+[x,z]=[x,w] \in [\mathfrak{g},\mathfrak{g}] \subset Z(\mathfrak{g}) \subset \mathfrak{a}.
   \end{equation*}
   To see it's abelian, choose $w_i+z_i \in \mathfrak{a}$, and we have that
   \begin{equation*}
       [w_1+z_1, w_2+z_2]=[w_1,w_2]+[w_1,z_2]+[z_1,w_2]+[z_1,z_2]=0,
   \end{equation*}
   where the first summand vanishes since $W$ is isotropic with respect to $B$, and the last three vanish by centrality of the $z_i$. As for the dimension, note that
   \begin{equation*}
       \dim(\mathfrak{a}) = \dim(W)+ \dim(Z(\mathfrak{g})) \geq \dim(V)-r+\dim(Z(\mathfrak{g})) = n-r,
   \end{equation*}
   so using Proposition \ref{abelianidealrankbound}, we must have that $\dim(\mathfrak{a})=n-r$. By Theorem \ref{MainThm}, $\mathfrak{g}$ must then be desingularizable. Now, suppose that $\mathfrak{g}$ is desingularizable, so by Theorem \ref{MainThm}, there is an abelian ideal $\mathfrak{a}$ of dimension $n-r$. Define $W:=\mathfrak{a}\cap V \subset V$. For any $w_i \in W$, we have that $B(w_1,w_2)=[w_1,w_2]=0$ since the $w_i \in \mathfrak{a}$, and so $W$ is isotropic. As for the dimension, note that
   \begin{align*}
       \dim(W) \geq \dim(\mathfrak{a}) - \dim (Z(\mathfrak{g})) = n-r- \dim(Z(\mathfrak{g}))=\dim(V)-r.
   \end{align*}
\end{proof}

\begin{question}
    Which Poisson manifolds $(M,\pi)$ lie in the image of $\mathscr{D}_{ai}$ for general $(M,\pi)$?
\end{question}

\bibliographystyle{amsplain}
\bibliography{refs}
\end{document}